\tikzstyle arrowstyle=[scale=1.5]
\tikzstyle directed=[postaction={decorate,decoration={markings,
  mark=at position 0.48 with {\arrow[arrowstyle]{{To}}}}}]
\newtheorem{theorem}{Theorem}[section]
\newtheorem{lemma}[theorem]{Lemma}
\newtheorem{corollary}[theorem]{Corollary}
\newtheorem{proposition}[theorem]{Proposition}
\newtheorem*{theorem*}{Theorem}
\theoremstyle{definition}
\newtheorem{definition}[theorem]{Definition}
\newtheorem{remark}[theorem]{Remark}
\numberwithin{equation}{section}
\newcommand{\PSL}{\mathrm{PSL}(2,\mathbb{R})}
\newcommand{\SL}{\mathrm{SL}(2,\mathbb{R})}
\newcommand{\R}{\mathbb{R}}
\newcommand{\tr}{\textnormal{tr}}
\newcommand{\D}{\mathbb{D}}
\newcommand{\N}{\mathbb{N}}
\newcommand{\C}{\mathbb{C}}
\renewcommand{\H}{\mathbb{H}}
\renewcommand{\P}{\mathbb{P}}
\newcommand{\A}{\mathcal{A}}
\renewcommand{\geq}{\geqslant}
\renewcommand{\leq}{\leqslant}
\renewcommand{\subset}{\subseteq}
\renewcommand{\epsilon}{\varepsilon}
\setlist[enumerate]{leftmargin=20pt,itemsep=0pt,topsep=0pt}
\setlist[enumerate,1]{label=\emph{(\roman*)},ref={(\roman*)}}
\let\Im\relax
\DeclareMathOperator{\Im}{Im}
\let\Re\relax
\DeclareMathOperator{\Re}{Re}
\title[Geometric conditions for matrix domination]{Geometric conditions for matrix domination in two dimensions}
\author{Argyrios Christodoulou}
\address{Department of Mathematics, Aristotle University of Thessaloniki, Greece, 54124.}
\email{argyriac@math.auth.gr}
\subjclass[2020]{Primary: 37D20, 37D30 15A60; Secondary: 30F45}
\keywords{Dominated splitting; products of matrices; hyperbolic geometry}
\begin{document}

\maketitle

\begin{abstract}
In this article we prove a necessary and a sufficient condition for a finite subset of the special linear group to be dominated. These conditions are purely geometric in nature, as they only involve the trace and the eigenvectors of the matrices, and can be computed explicitly. Our sufficient condition, in particular, provides a simple algorithm for constructing a dominated set with prescribed eigenvectors, which is also applicable to certain Fuchsian groups. Our techniques involve an intuitive way of quantifying the geometric configuration of pairs of matrices, and take advantage of the interaction between dominated sets and two-dimensional hyperbolic geometry.
\end{abstract}

\section{Introduction}

The notion of a dominated splitting for a dynamical system was introduced by Ma\~ne \cite{Ma1978, Ma1984} in his seminal work on the $C^1$-stability Conjecture, as a weaker version of the notion of hyperbolicity introduced by Smale. It has since appeared in various areas of the theory of dynamical systems, such as the continuity of Lyapunov exponents \cite{BoVi2005}, differentiable dynamics \cite{BoDiPu2003}, Anosov Representations \cite{BoPoSa2019}, control theory \cite{CoKl2000} and the continuity properties of the lower spectral radius \cite{BoMo2015}. A detailed overview of dominated splittings, often simply called \emph{domination}, can be found in the survey \cite{Sa2016}.

The main focus of this article is dominated subsets of $\SL$. The two-dimensional case is particularly interesting, since domination is linked with dimension theory \cite{ChJu2021, SoTa2021} and the spectral theory of Schroedinger operators \cite{DaFiGo2022, Wo2022}; applications unique to the setting of $\SL$.

Let us start by providing the definition of domination in this setting. Throughout the article, we equip $\SL$ with an operator norm and with the induced topology.

\begin{definition}\label{def: domination (norm)}
A finite set $\A=\{A_1,A_2,\dots,A_N\}\subset\SL$ will be called \emph{dominated} if there exist constants $C>0$ and $\lambda>1$, so that 
\[
\lVert A_{i_n}A_{i_{n-1}}\cdots A_{i_1}\rVert \geq C \lambda^n, \quad \text{for any sequence}\ (i_j)\subset\{1,2,\dots,N\}\ \text{and all}\ n\in\N.
\]
\end{definition}

This particular characterization of domination---that is best suited for our purposes---was obtained by Yoccoz in \cite{Yo2004} in his study of the hyperbolicity properties of linear cocycles. Also, in this two-dimensional setting domination is often referred to as \emph{uniform hyperbolicity} (see, for example, \cite{AvBoYo2010, Yo2004} and references therein).

Despite its simplicity, determining whether the set $\A$ is dominated using Definition \ref{def: domination (norm)} is a difficult endeavour. To alleviate some of these difficulties, Avila, Bochi and Yoccoz \cite{AvBoYo2010} showed that domination is equivalent to the fact that $\A$ acts as a uniformly contracting iterated function system on a part of the projective real line $\R\P^1$ (see Theorem \ref{aby} for the precise statement). A higher-dimensional analogue of this condition was obtained by Bochi and Gourmelon \cite{BoGo2009}, and was further explored by Barnsley and Vince \cite{BaVi2012}. Recently, these ideas have been adapted to the setting of operators on Banach spaces by Blumenthal and Morris \cite{BlMo2019}, and Quas, Thieullen and Zarrabi \cite{QTZ2019}. 

Inspired by this geometric characterization, we aim at providing a necessary and a sufficient condition for domination that explicitly features the geometry of the action of a pair of matrices on $\R\P^1$.

First, we recall that a non-identity matrix $A\in\SL$ is called \emph{hyperbolic} if $\lvert \tr(A)\rvert >2$, \emph{parabolic} if $\lvert \tr(A)\rvert =2$, and \emph{elliptic} otherwise. A hyperbolic matrix $A$ has two eigenvalues $\lvert\lambda_u(A)\rvert>\lvert\lambda_s(A)\rvert$, with distinct corresponding eigenvectors $u(A),s(A)\in \R\P^1$. On the other hand, if $A$ is parabolic then it has a unique eigenvector with algebraic multiplicity two, in which case we write $u(A)=s(A)$.

Throughout the article, we identify $\R\P^1$ with the extended real line $\R\cup\{\infty\}$. With this identification in mind, for a pair of hyperbolic matrices $A,B\in\SL$ we define their \emph{cross-ratio} as
\[
C(A,B)\vcentcolon=[u(A),s(A),u(B),s(B)]=\frac{u(A)-u(B)}{u(A)-s(B)}\frac{s(A)-s(B)}{s(A)-u(B)},
\]
with the standard conventions about the point at infinity. If $A,B$ are hyperbolic matrices with pairwise distinct fixed points then $C(A,B)$ is well-defined (i.e. both denominators are non-zero) and does not equal zero or one. We also note that cross-ratios may appear with different orderings in the literature. 

The cross-ratio, which has been previously considered by Jacques and Short \cite{JS}, is a simple way to quantify the geometric configuration of the pair $A,B$. For example, if $C(A,B)$ is well-defined and $C(A,B)<1$, then $A$ and $B$ exhibit behaviour similar to positive matrices, in that they map a non-trivial open and connected subset of $\R\P^1$ compactly inside itself, meaning that $\{A,B\}$ is dominated due to the aforementioned result of Avila, Bochi and Yoccoz (see Section \ref{sect: cross} for more details).

Our first result provides an algorithm for constructing dominated sets, that depends on evaluating the cross-ratios of pairs.

\begin{theorem}\label{MAIN}
Let $u_1,u_2,\dots,u_N,s_1,s_2,\dots s_N\in\R\P^1$ be $2N$ pairwise distinct points in the projective real line. Suppose that $A_1,A_2,\dots A_N\in\SL$ are  hyperbolic matrices with $u(A_i)=u_i$ and $s(A_i)=s_i$, for $i=1,2,\dots,N$, and consider the constants 
\[
M_{i,j} = \max\left\{1+\left\lvert C(A_i,A_j)\right\rvert,1+\frac{1}{\left\lvert C(A_i,A_j)\right\rvert}\right\},
\]
for all pairs $i,j\in\{1,2,\dots,N\}$ with $i\neq j$. If 
\begin{equation}\label{eq: MAIN}
\tfrac{1}{2}\lvert \tr(A_k)\rvert > \max_{i\neq j}\{M_{i,j}\}\cdot \max_{i\neq j}\left\{\frac{\lvert C(A_i,A_j)\rvert +1}{\lvert C(A_i,A_j)-1\rvert}\right\}, \quad \text{for all}\ k=1,2,\dots,N,
\end{equation}
then the set $\{A_1,A_2,\dots, A_N\}$ is dominated.
\end{theorem}

Note that the constants $M_{i,j}$ and the constant on the right-hand side of \eqref{eq: MAIN}, are well-defined and depend only on the collection of pairwise distinct points $\{u_1,u_2,\dots,u_N,s_1,s_2,\dots s_N\}$. 

Observe that each hyperbolic matrix $A$ is determined by three independent parameters. Two of these are its eigenvectors, and the third is its \emph{contraction rate}; i.e. a measure of how much $A$ shifts the projective space towards its fixed point $u(A)$. This contraction rate is typically quantified by $\lvert \tr(A)\rvert$ (see also Section \ref{sect: hyperbolic geometry} for a more geometric description).

So, Theorem \ref{MAIN} tells us that if we wanted to construct a dominated set $\{A_1,A_2,\dots,A_N\}$, where each matrix $A_i$ has some prescribed eigenvectors $u_i,s_i$, then we only need increase the contraction rates of all $A_1,A_2,\dots,A_N$ enough to satisfy the open property \eqref{eq: MAIN}. In particular, the algebraic simplicity of the constants in \eqref{eq: MAIN} makes Theorem \ref{MAIN} an efficient tool for constructing explicit examples of dominated sets, which are quite rare in the literature. 

The proof of Theorem \ref{MAIN} also shows that when \eqref{eq: MAIN} is satisfied, the \emph{group} generated, under multiplication, by the matrices $\{A_1,A_2,\dots,A_N\}$ is a discrete, free and purely hyperbolic subgroup of $\SL$. Groups of this type are called \emph{(classical) Schottky groups}, and appear naturally in the study of Riemann surfaces and Teichm\"uller theory (see, for example, \cite{Ma1998,He2015} and references therein). Therefore, Theorem \ref{MAIN} also provides an efficient algorithm for producing Schottky groups with prescribed geometry, in the same spirit as the famous Poincar\'e's Theorem (see \cite[Section 9.8]{Be1995}).
 
The techniques involved in this article revolve around analysing the constraints on the traces of a dominated pair $\{A,B\}$, imposed by the geometric configuration of $A$ and $B$ (as quantified by $C(A,B)$).

In particular, we are led to to the following dichotomy, which shows that under a certain threshold the dynamical behaviour of $\{A,B\}$ is either very rigid, or quite chaotic.

\begin{theorem}\label{thm: anti classification}
Assume that $A,B\in\SL$ are hyperbolic matrices such that $C(A,B)$ is well-defined and satisfies $C(A,B)>1$. If
\[
\max\left\{\tfrac{1}{2}\lvert \mathrm{tr}(A)\rvert, \tfrac{1}{2}\lvert \mathrm{tr}(B)\rvert\right\}\leq \ \frac{5\ C(A,B)-1}{3\ C(A,B)+1},
\]
then the semigroup, under matrix multiplication, generated by $\{A,B\}$ is either a discrete subgroup of $\SL$, or dense in $\SL$.
\end{theorem}

Definition \ref{def: domination (norm)} immediately shows that if a finite set $\A$ is dominated, then the semigroup generated by $\A$ is a discrete subset of $\SL$ that contains only hyperbolic matrices. Therefore, a direct corollary of Theorem \ref{thm: anti classification} is the following necessary condition for domination.

\begin{corollary}\label{coro: jorg}
If the set $\{A,B\}\subset\SL$ is dominated, then $C(A,B)$ is well-defined, and either $C(A,B)<1$ or 
\[
\max\left\{\tfrac{1}{2}\lvert \mathrm{tr}(A)\rvert, \tfrac{1}{2}\lvert \mathrm{tr}(B)\rvert\right\}> \ \frac{5\ C(A,B)-1}{3\ C(A,B)+1}.
\]
\end{corollary}

Corollary \ref{coro: jorg} is in the same spirit as J\"orgensen's inequality \cite{Jo1976} from the theory of Kleinian groups, as it quantifies the the intuitive fact that if a pair is dominated then the contraction rates of the matrices cannot both be very small, unless the geometry of their action is somewhat trivial (i.e. $C(A,B)<1$). 

J\"orgensen's inequality has been extended to a large class of matrix sets, that contains all dominated sets by Jacques and Short \cite[Theorem 12.11]{JS}, and was later used in \cite{Ch2022} for the study of the boundary of domination. It is also worth mentioning that a condition fulfilling a similar purpose was obtained by Avila, Bochi and Yoccoz in \cite[Lemma 4.8]{AvBoYo2010}. The main advantage of Corollary \ref{coro: jorg} is its very explicit dependence on the geometric configuration of $\{A,B\}$, as is evident by the use of the cross-ratio.

Finally, our techniques allow us to obtain a necessary and sufficient condition for domination in the special case of a pair of matrices with equal traces.

\begin{theorem}\label{thm: equal trace}
Assume that $A,B\in\SL$ are hyperbolic matrices such that $C(A,B)$ is well-defined, $C(A,B)>1$ and $\lvert \tr(A) \rvert =\lvert \tr(B) \rvert=t$. Then $\{A,B\}$ is dominated if and only if ${\tfrac{1}{2}t>\sqrt{C(A,B)}}$.
\end{theorem}

In order to prove our results, we employ techniques from two-dimensional hyperbolic geometry in the complex plane. In particular, we view $\SL$-matrices as M\"obius transformations acting on the hyperbolic plane, and study how this action affects their domination properties. This perspective allows us to better control the geometric configuration of a finite set of matrices in order to obtain the explicit bounds present in our theorems.

The article is structured as follows: In the second section we introduce the concepts necessary for our work (M\"obius transformations and hyperbolic geometry) and thoroughly analyse cross-ratios. Theorem \ref{thm: anti classification} is proved in Section \ref{sect: necessary condition}. Section \ref{sect: constraints on pairs} is dedicated to establishing certain constraints on pairs of matrices that guarantee domination, and contains the proof of Theorem \ref{thm: equal trace}. Section \ref{sect: proof of main theorem} is dedicated to the proof of our example-constructing method, Theorem \ref{MAIN}, and its analogue for Schottky groups.

\section{Background material}

\subsection{Domination and the action on projective space}

As mentioned in the introduction, we identify the real projective line $\R\P^1$ with the extended real line $\overline{\R}\vcentcolon=\R\cup\{\infty\}$. We  will frequently refer to \emph{intervals} of the extended real line, which are the usual intervals of $\R$ along with sets of the form $[\infty,a)\cup(b,\infty]$, for some real numbers $a\leq b$, and usual modifications to include one of (or both) $a$ and $b$.

With this convention $\SL$-matrices act on $\R\P^1$ as M\"obius transformations with real entries. So, we have the following correspondence
\[
\PSL\ni \begin{pmatrix}
a & b \\ c & d
\end{pmatrix} \mapsto \frac{ax+b}{cx+d}=f(x).
\]
The use of the group $\PSL=\SL/\{\pm I\}$ instead of $\SL$ is justified by the fact that the matrices $A$ and $-A$ induce the same M\"obius transformation. So for the rest of this article we will consider finite sets of M\"obius transformations $\{f_1,f_2,\dots, f_n\}\subset \PSL$ instead of matrices. With this correspondence, the product of two matrices $A_1\cdot A_2$ corresponds to the composition $f_1\circ f_2$, where $f_1$ is the M\"obius transformation induced by $A_1$ and similarly for $f_2$. Also, we equip $\PSL$ with the quotient topology induced by the topology of $\SL$. 

Observe that if a set $\{A_1,A_2,\dots, A_N\}\subset\SL$ is dominated, then any set of the form $\{\pm A_1,\pm A_2,\dots, \pm A_N\}$ is also dominated. Therefore, Definition \ref{def: domination (norm)} carries over verbatim to finite subsets of $\PSL$. In $\PSL$, however, we also have a geometric characterization of domination due to Avila, Bochi and Yoccoz \cite[Theorem 2.2]{AvBoYo2010}, which is a key element of our analysis.

\begin{theorem}\label{aby}
A set $\{f_1,f_2,\dots,f_n\}\subset\PSL$ is dominated if and only if there exists a finite union $X\subsetneq\overline{\R}$ of open intervals, with disjoint closures, such that $\overline{f_i(X)}\subset X$, for all $i=1,2,\dots,n$.
\end{theorem}

\subsection{Hyperbolic geometry of the M\"obius group}\label{sect: hyperbolic geometry}

One of the main advantages of considering $\PSL$ as a group of M\"obius transformations is that its action on $\overline{\R}$ can be extended to the upper half-plane $\H=\{z\in\C\colon \mathrm{Im}\ z >0\}$ of the complex plane. If we now equip $\H$ with the hyperbolic distance $d_\H$, i.e. the distance induced by the Riemannian metric $\lvert dz \rvert/\Im{z}$, then $\PSL$ is exactly the group of conformal isometries of the complete metric space $(\H,d_\H)$. A thorough analysis of the hyperbolic geometry of $\H$, along with most of the material presented in this subsection, can be found in \cite[Chapter 7]{Be1995}.

One can compute the following closed formula for $d_\H$ (see, for example, \cite[Theorem 7.2.1]{Be1995}):

\begin{equation}\label{eq: hyperbolic metric in H}
d_\H(z_1,z_2)=\log\frac{\lvert z_1 - \overline{z_2}\rvert + \lvert z_1- z_2\rvert}{\lvert z_1 - \overline{z_2}\rvert - \lvert z_1- z_2\rvert},\quad \text{for all}\ z_1,z_2\in\H,
\end{equation}

or, equivalently

\begin{equation}\label{dist-formula}
\tanh\tfrac{1}{2}d_\H(z_1,z_2) = \frac{\lvert z_1-z_2\rvert}{\lvert z_1-\overline{z_2}\rvert}.
\end{equation}

The geodesics of the hyperbolic distance $d_\H$ are the vertical half-lines and semicircles orthogonal to $\R$. We say that two hyperbolic geodesics are \emph{disjoint} if they do not meet in $\H$ and have distinct endpoints i.e. their closures are disjoint subsets of $\H\cup\overline{\R}$. The distance between two geodesics $\gamma_1$ and $\gamma_2$ is defined as 
\[
d_\H(\gamma_1,\gamma_2)\vcentcolon=\inf\{d_\H(z_1,z_2)\colon z_1\in\gamma_1,\ z_2\in\gamma_2\}.
\]

Now suppose that $f(x)=(ax+b)/(cx+d)$ is the M\"obius transformation induced by a matrix $A\in\SL$. We define the trace of $f$ as $\lvert\tr(f) \rvert \vcentcolon=\lvert \tr(A)\rvert = \lvert a+ d\rvert$. The use of the absolute value here is important since it alleviates any ambiguity in the sign. With this definition the classification of matrices to hyperbolic, parabolic and elliptic we mentioned in the introduction extends naturally to M\"obius transformations. That is, $f$ is called \emph{hyperbolic} if the inducing matrix $A$ is hyperbolic, and similarly for \emph{parabolic} and \emph{elliptic} transformations. 

An equivalent classification can be obtained by considering the fixed points in $\H\cup\overline{\R}$ of the transformation. So, a non-identity M\"obius transformation is hyperbolic, parabolic or elliptic depending on whether it has two fixed points $\overline{\R}$, one fixed point in $\overline{\mathbb{R}}$, or one fixed point in $\H$, respectively.

Suppose that $f$ is a hyperbolic transformation in $\PSL$. The fixed points of $f$ will be denoted by $u(f),s(f)$ and are exactly the projections in $\overline{\R}$ of the eigenvectors of the matrix inducing $f$ (hence the similar notation). If we let $f^n\vcentcolon=f\circ f\circ \dots \circ f$ denote the $n$-th iterate of $f$, i.e. the composition of $f$ with itself $n$-times, then the fixed point $u(f)$ of $f$ has the property that the sequence $(f^n)$ converges to the constant $u(f)$ uniformly on compact subsets of $\H$. We thus call $u(f)$ \emph{the attracting fixed point} of $f$. The other fixed point is called the \emph{repelling fixed point}.

The unique hyperbolic geodesic joining the fixed points $u(f),s(f)$ of a hyperbolic transformation $f$ is called the \emph{axis} of $f$ and is denoted by $\mathrm{Ax}(f)$. Since $f$ is an isometry of $d_\H$, the curve $\mathrm{Ax}(f)$ is preserved set-wise by $f$.

All our results require a quantifiable way to measure the contraction rate of hyperbolic transformation. The prevalent method to do so is the \emph{translation length} $T_f$ of a hyperbolic transformation $f$, defined as follows
\[
T_f\vcentcolon=\inf\{d_\H(z,f(z))\colon z\in \H\}.
\]
This infimum is in fact a minimum that is attained for any point $z_0\in\mathrm{Ax}(f)$. The translation length is directly related to the trace of the transformation with the following formula, found in \cite[Section 7.34]{Be1995}.
\begin{equation}\label{eq: trace and translation length}
\cosh\tfrac{1}{2}T_f= \tfrac{1}{2}\lvert \tr(f)\rvert, 
\end{equation}
meaning that a simple way to quantify the contraction rate is by the trace, as we mentioned in the introduction. 

Furthermore, observe that since $f$ fixes $\mathrm{Ax}(f)$ set-wise, $f^n(z_0)\in\mathrm{Ax}(f)$ for any $z_0\in\mathrm{Ax}(f)$ and all $n\in\N$. Using the facts that $\mathrm{Ax}(f)$ is a hyperbolic geodesic and $f$ is an isometry we get that
\begin{align}
T_{f^n}&=d_\H(f^n(z_0),z_0)=d_\H(f^n(z_0),f^{n-1}(z_0))+d_\H(f^{n-1}(z_0),f^{n-2}(z_0))+\cdots+d_\H(f(z_0),z_0)\nonumber\\
	&=d_\H(f(z_0),z_0)+d_\H(f(z_0),z_0)+\cdots+d_\H(f(z_0),z_0)=n\ T_f. \label{eq: translation length of iterates}
\end{align}

We will often find it convenient to switch to the disc model $\mathbb{D}=\{z\in\C\colon \lvert z \rvert<1\}$ of hyperbolic space. In $\D$, the hyperbolic distance is induced by the metric $\frac{2\lvert dz \rvert}{1-\lvert z\rvert^2}$. Since the half-plane and the disc models are conformally equivalent, we can move freely between them and will do so without explicit mention. In particular, we use the disc model for most of our figures where a hyperbolic transformation will be portrayed by drawing its axis as a directed hyperbolic geodesic pointing towards its attracting fixed point as in Figure \ref{fig: hyperbolic transformation}.

\begin{figure}[ht]

\begin{tikzpicture}[scale=1.8, , decoration={
    markings,
    mark=at position 0.5 with {\arrow{To}}}]
\draw  (0.,0.) circle (1.cm);
\draw [postaction={decorate}, shift={(0.9653642003099734,-1.148898844767251)}]  plot[domain=1.5402527146681928:2.9989546899046022,variable=\t]({1.*1.118881850218172*cos(\t r)+0.*1.118881850218172*sin(\t r)},{0.*1.118881850218172*cos(\t r)+1.*1.118881850218172*sin(\t r)});

\node[above left] at (0.27,-0.27) {$f$};

\filldraw[black] (-0.1421547799234557,-0.9898444415891388) circle (.5pt) node[below] {$u(f)$};
\filldraw[black] (0.9995335801415512,-0.030538863263278673) circle (.5pt) node[right] {$s(f)$};
\end{tikzpicture}
\caption{A hyperbolic transformation depicted by its axis directed towards the attracting fixed point.}
\label{fig: hyperbolic transformation}
\end{figure}

\subsection{Cross-ratios}\label{sect: cross}

Here we explore how the cross-ratio of two hyperbolic transformations can be used to determine their geometric configuration. First, recall that the cross-ratio of two hyperbolic transformations $f,g\in\PSL$ is given by
\begin{equation}\label{eq: cross-ratio def}
C(f,g)\vcentcolon=[u(f),s(f),u(g),s(g)]=\frac{u(f)-u(g)}{u(f)-s(g)}\frac{s(f)-s(g)}{s(f)-u(g)}.
\end{equation}
If any of the fixed points of $f$ and $g$ is the point at infinity, then we simply omit any of the differences in the above fractions that contain it. For example, if $u(g)=\infty$, then 
\[
C(f,g)=\frac{s(f)-s(g)}{u(f)-s(g)}.
\]
Note that the cross-ratio is well-defined only when $u(f)\neq s(g)$ and $u(g)\neq s(f)$. If, however, any of these two conditions does not hold, then it is easy to show that the pair $\{f,g\}$ is not dominated (see \cite[Lemma 10.4]{JS}). So, since we are only interested in the domination properties of sets of transformations, we always assume that the cross-ratio of a pair is well-defined.

Simple calculations show that $C(f,g)=1$ is equivalent to $(u(f)-s(f))(u(g)-s(g))=0$. Therefore, for any two hyperbolic transformations we have that $C(f,g)\neq 1$. Furthermore, if $f$ and $g$ do not share any fixed points we have the following standard properties for the cross-ratio: $C(f,g)=C(g,f)$ and $C(f^{-1},g)\cdot C(f,g)=1$.

The value of the cross-ratio leads to a classification of the geometric configuration of the axes of two hyperbolic transformations, as shown in Figure \ref{cross} (this classification is justified by Lemma \ref{crl}, to follow). This figure, along with the geometric characterization of domination in Theorem \ref{aby}, immediately show that if $C(f,g)<1$, then $\{f,g\}$ is dominated, as we mentioned in the introduction.

\begin{figure}[ht]
\centering
\begin{tikzpicture}

\begin{scope}[scale=1.5, decoration={
    markings,
    mark=at position 0.8 with {\arrow{To}}}]
    
\draw [line width=0.4pt] (0.,0.) circle (1.cm);
\draw [line width=0.4pt, postaction= {decorate}] (0.7071067811865475,0.7071067811865475) -- (-0.7071067811865476,-0.7071067811865476);
\draw [line width=0.4pt, postaction= {decorate}] (-0.7071067811865477,0.7071067811865475) --  (0.7071067811865476,-0.7071067811865474);
\node[below] at (0,-1.1) {$C(f,g)\leq 0$};
\node [left] at (-0.40,-0.37) {$f$};
\node [right] at (0.41,-0.38) {$g$};
\end{scope}

\begin{scope}[xshift =4.5cm, scale=1.5, decoration={
    markings,
    mark=at position 0.53 with {\arrow{To}}}]
    
\draw [line width=0.4pt] (0.,0.) circle (1.cm);
\draw [shift={(-1.8312065528259258,0.)},line width=0.4pt, postaction={decorate}]  plot[domain=0.5776873665141405:-0.5776873665141409,variable=\t]({1.*1.5340526194080204*cos(\t r)+0.*1.5340526194080204*sin(\t r)},{0.*1.5340526194080204*cos(\t r)+1.*1.5340526194080204*sin(\t r)});
\draw [shift={(1.8312065528259265,0.)},line width=0.4pt, postaction={decorate}]  plot[domain=2.563905287075653:3.719280020103933,variable=\t]({1.*1.5340526194080215*cos(\t r)+0.*1.5340526194080215*sin(\t r)},{0.*1.5340526194080215*cos(\t r)+1.*1.5340526194080215*sin(\t r)});

\node[below] at (0,-1.1) {$0<C(f,g)<1$};
\node [left] at (-.4,0) {$f$};
\node [right] at (.4,0) {$g$};
\end{scope}

\begin{scope}[xshift =9cm, scale=1.5, decoration={
    markings,
    mark=at position 0.523 with {\arrow{To}}}]
    
\draw [line width=0.4pt] (0.,0.) circle (1.cm);
\draw [shift={(-1.8312065528259258,0.)},line width=0.4pt, postaction={decorate}]  plot[domain=-0.5776873665141409:0.5776873665141405,variable=\t]({1.*1.5340526194080204*cos(\t r)+0.*1.5340526194080204*sin(\t r)},{0.*1.5340526194080204*cos(\t r)+1.*1.5340526194080204*sin(\t r)});
\draw [shift={(1.8312065528259265,0.)},line width=0.4pt, postaction={decorate}]  plot[domain=2.563905287075653:3.719280020103933,variable=\t]({1.*1.5340526194080215*cos(\t r)+0.*1.5340526194080215*sin(\t r)},{0.*1.5340526194080215*cos(\t r)+1.*1.5340526194080215*sin(\t r)});
\node[below] at (0,-1.1) {$C(f,g)>1$};
\node [left] at (-.4,0) {$f$};
\node [right] at (.4,0) {$g$};
\end{scope}

\end{tikzpicture}
\caption{Cross-ratio and geometric configuration}\label{cross}
\end{figure}

Now, let us establish formulas that relate the cross-ratio $C(f,g)$ of two hyperbolic transformations $f$ and $g$ with the geometric configuration of their axes. Equivalent versions of the formulae in Lemma \ref{crl}, to follow, appear in \cite[p. 166]{Be1995}, but we include a proof for the sake of completeness.

We first need to mention that if the axes of two hyperbolic transformations $f$ and $g$ cross at a point $p\in\mathbb{H}$, we define the angle $\theta\in[0,\pi]$ between $\mathrm{Ax}(f)$ and $\mathrm{Ax}(g)$ to be the angle at $p$ of the hyperbolic triangle defined by $\alpha(f),\alpha(g)$ and $p$. In this case, we say that the axes of $f$ and $g$ \emph{cross at an angle $\theta$}.

\begin{lemma}\label{crl}
Suppose that $f$ and $g$ are hyperbolic transformations.
\begin{enumerate}
\item If the axes of $f$ and $g$ cross at an angle $\theta$, then $C(f,g)=-\tan^2\tfrac{1}{2}\theta$.

\item If the axes of $f$ and $g$ are disjoint and a hyperbolic distance $d$ apart, then
\[
C(f,g)= 
\begin{cases}
\tanh^2\tfrac{1}{2}d, &\text{if}\quad 0<C(f,g)<1,\\[5pt]
\coth^2\tfrac{1}{2}d, &\text{if}\quad C(f,g)>1.
\end{cases}
\]
\end{enumerate}
\end{lemma}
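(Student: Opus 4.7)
Both sides of each identity are invariant under the diagonal action of $\mathcal{M}$ on pairs of hyperbolic isometries: $C(f,g)$ is a cross ratio of four points on $\overline{\mathbb{R}}$, hence Möbius invariant, while $\theta$ and $d$ are hyperbolic-geometric quantities. The strategy is therefore to apply one well-chosen normalizing Möbius transformation in each case and then compute both sides directly.

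For part (i) I would transfer to the disc model $\mathbb{D}$ and pre-compose with an isometry sending the crossing point of $\mathrm{Ax}(f)$ and $\mathrm{Ax}(g)$ to the origin $0$. Both axes then become diameters, and after a rotation I can arrange that
\[
\alpha(f) = e^{i\theta/2},\ \beta(f) = -e^{i\theta/2},\ \alpha(g) = e^{-i\theta/2},\ \beta(g) = -e^{-i\theta/2},
\]
so that the angle at $0$ between the outgoing directions to $\alpha(f)$ and $\alpha(g)$ is exactly $\theta$. The four differences in the numerator and denominator of $C(f,g)$ then evaluate to $\pm 2i\sin(\theta/2)$ and $\pm 2\cos(\theta/2)$, and a single line of multiplication gives $C(f,g) = -\tan^2(\theta/2)$.

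For part (ii) I would stay in $\mathbb{H}$ and send the common perpendicular of the two axes onto the imaginary axis. Then $\mathrm{Ax}(f)$ and $\mathrm{Ax}(g)$ are both Euclidean semicircles centred at $0$, which (after scaling along the imaginary axis) I can take to have endpoints $\pm 1$ and $\pm e^{d}$ respectively; the common perpendicular meets them at $i$ and $ie^{d}$, and $\rho(i, ie^d) = d$ confirms that $d$ is indeed the distance between the axes. Fixing $\alpha(f) = 1, \beta(f) = -1$, there remain two orientation sub-cases, $\alpha(g) = e^d$ or $\alpha(g) = -e^d$. Each yields a short computation that reduces via the identity $(e^d - 1)/(e^d + 1) = \tanh(d/2)$ to $C(f,g) = \tanh^2(d/2)$ in the first sub-case and $C(f,g) = \coth^2(d/2)$ in the second. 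Comparing the two orientations with Figure~\ref{cross} then matches them with the stated ranges $0 < C(f,g) < 1$ and $C(f,g) > 1$.

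The calculations themselves are mechanical once the normalization is in place; the only place where care is required is in the two orientation sub-cases of (ii), where one must keep straight which pair of labels $\alpha(g),\beta(g)$ is being used and verify the matching with the dichotomy $0 < C(f,g) < 1$ versus $C(f,g) > 1$.
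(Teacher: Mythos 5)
Your proposal is correct and follows essentially the same route as the paper: both parts are proved by Möbius-invariance of $C(f,g)$ followed by a normalization (axes through the origin of $\mathbb{D}$ for the crossing case, concentric semicircles about $0$ in $\mathbb{H}$ with the imaginary axis as common perpendicular for the disjoint case) and a direct computation, with $\lambda/\sigma=e^{d}$ playing the role of your radii $1$ and $e^{d}$. The only cosmetic difference is in part (i), where the paper first argues separately that $C(f,g)<0$ (by conjugating $f$ to $z\mapsto az$) and then evaluates $\lvert C(f,g)\rvert$ via the law of cosines, whereas your direct computation with the endpoints $\pm e^{\pm i\theta/2}$ produces the sign automatically.
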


\begin{proof}
For part \textit{(i)}, suppose that the axes of $f$ and $g$ cross at an angle $\theta$. Conjugate $f$ and $g$ by a transformation $\phi\in\mathrm{PSL}(2,\mathbb{C})$ so that they act on the unit disc. If we denote $F=\phi\circ f \circ \phi^{-1}, G=\phi\circ g \circ \phi^{-1}$, then $C(F,G)=C(f,g)$ by the invariance of the cross-ratio, and their axes still cross at an angle $\theta$ by the conformality of M\"obius transformations. Moreover, $\phi$ can be chosen so that the axes of $F$ and $G$ meet at the origin, and the diameter through $i$ and $-i$ bisects the angle $\theta$ (see Figure~\ref{cross-for} on the left). Due to the symmetry of this configuration we have that $u(F)=\overline{s(G)},\ u(F)=-s(F)$ and $u(G)=-s(G)$. Hence,
\[
C(F,G)=\frac{u(F)-u(G)}{u(F)-s(G)}\frac{s(F)-s(G)}{s(F)-u(G)}= \frac{-2\Re s(F)}{-2i\Im s(F)} \frac{2\Re s(F)}{2i\Im s(F)}= -\left(\frac{\Re s(F)}{\Im s(F)}\right)^2.
\]
The law of cosines applied to the Euclidean triangles with vertices $0, u(F),s(G)$ and $0,u(F),u(G)$, yield $\left(2\Re u(F)\right)^2 = 2(1-\cos\theta )$ and  $\left(2\Im u(F)\right)^2=2(1-\cos(\pi-\theta))$, respectively. Therefore,
\[
C(F,G) = -\frac{1-\cos\theta}{1-\cos(\pi-\theta)}=-\frac{1-\cos\theta}{1+\cos\theta}=-\tan^2\tfrac{1}{2}\theta.
\]
Moving on to part (ii), assume that the axes of $f$ and $g$ are disjoint and a hyperbolic distance $d$ apart. Since $C(f^{-1},g)=1/C(f,g)$ and $C(f,g)\neq 1$, it suffices to show that $C(f,g)=\tanh^2\tfrac{1}{2}d$ whenever $0<C(f,g)<1$. Conjugate $f$ and $g$ in $\PSL$ so that $f$ fixes $-\sigma, \sigma$ and $g$ fixes $-\lambda, \lambda$, for some $0<\sigma<\lambda$, as shown in Figure~\ref{cross-for} on the right. Then,
\[
C(f,g)=\frac{(\lambda-\sigma)^2}{(\lambda+\sigma)^2}.
\]
Since the imaginary half-axis is perpendicular to both $\textrm{Ax}(f)$ and $\textrm{Ax}(g)$, we have that $d=\rho_{\mathbb{H}}(i\sigma,i\lambda)$. Using formula \eqref{dist-formula} yields
\[
\tanh^2\tfrac{1}{2}d=\tanh^2\tfrac{1}{2}\rho_{\mathbb{H}}(i\sigma,i\lambda)=\frac{\lvert i\sigma - i \lambda \rvert^2}{\lvert i\sigma -\overline{i\lambda} \rvert^2} = \frac{(\lambda-\sigma)^2}{(\lambda+\sigma)^2}=C(f,g),
\]
completing our proof.
\end{proof}

\begin{figure}[ht]
\centering
\begin{tikzpicture}

\begin{scope}[scale =2.5 , xshift= 3cm, decoration={
    markings,
    mark=at position 0.635 with {\arrow{To}}}
    ] 
\draw[->] (-1.1,0) -- (1.1,0);
\draw (0,0) -- (0,1.2);
\draw[postaction={decorate}] (5/8,0) arc (0:180:5/8);
\draw[postaction={decorate}] (5.5/6,0) arc (0:180:5.5/6);
\draw (-0,5.8/6) -| (-.05, 5.5/6);
\draw (-0,5.4/8) -| (-.05, 5/8);

\node[below] at (-0.2,5/9) {$f$};
\node[above] at (-0.35,5.2/6) {$g$};
\node[below] at (5.5/6,0) {$\lambda$};
\node[below] at (-5.5/6,0) {$-\lambda$};
\node[below] at (5/8,0) {$\sigma$};
\node[below] at (-5/8,0) {$-\sigma$};
\node[right] at (0.1, 0.7708) {$d$};
\draw[decorate,decoration={brace,raise=4pt}] (0,5.45/6) --  (0,5.05/8);
\end{scope}

\begin{scope}[scale=1.9, yshift=.8cm, decoration={
    markings,
    mark=at position 0.8 with {\arrow{To}}}
    ] 
\coordinate (o) at (0,0);
\coordinate (r) at (0.71,-0.71);
\coordinate (l) at (-0.71,-0.71);

\draw [line width=0.4pt] (0.,0.) circle (1.cm);
\draw [line width=0.4pt, postaction= {decorate}] (0.7071067811865475,0.7071067811865475) -- (-0.7071067811865476,-0.7071067811865476) ;
\draw [line width=0.4pt, postaction= {decorate}] (-0.7071067811865477,0.7071067811865475) --  (0.7071067811865476,-0.7071067811865474);
\pic [draw, "$\theta$", angle eccentricity=1.5, line width=0.4pt] {angle = l--o--r};
\node[left] at (-0.40,-0.37) {$F$};
\node[right] at (0.41,-0.38) {$G$};
\node[above] at (0,0.05) {$0$};
\node [below left] at (-0.71,-0.71) {$u(F)$};
\node [below right] at (0.71,-0.71) {$u(G)$};
\node [above right] at (0.71,0.71) {$s(F)$};
\node [above left] at (-0.71,0.71) {$s(G)$};

\end{scope}
\end{tikzpicture}
\caption{Formulas for the cross-ratio in Lemma \ref{crl}.}\label{cross-for}
\end{figure}

Using the standard formulas for the trigonometric and hyperbolic trigonometric functions we can easily obtain the next formulas that will prove useful to our calculations.

\begin{corollary}\label{cross-coro}
Suppose that $f$ and $g$ are hyperbolic transformations.
\begin{enumerate}
\item If the axes of $f$ and $g$ cross at an angle $\theta$, then $\cos^2\tfrac{1}{2}\theta=\frac{1}{1-C(f,g)}$.

\item If the axes of $f$ and $g$ are disjoint and a hyperbolic distance $d$ apart, then $\cosh d = \frac{C(f,g)+1}{\lvert C(f,g)-1\rvert}$.

\end{enumerate}
\end{corollary}

\section{A necessary condition for domination} \label{sect: necessary condition}

In this section we prove Theorem \ref{thm: anti classification} which immediately yields the necessary condition for matrix domination given in Corollary \ref{coro: jorg}.

The main ingredient for the proof is the following proposition, which shows that if the traces of two transformations are small, relative to their geometric configuration, then the semigroup they generate contains elliptic transformations.

\begin{proposition}\label{anti}
Let $f,g$ be two hyperbolic transformations with $C(f,g)>1$. If 
\[
\max\left\{\tfrac{1}{2}\lvert \mathrm{tr}(f)\rvert, \tfrac{1}{2}\lvert \mathrm{tr}(g)\rvert\right\}\leq \ \frac{5C(f,g)-1}{3C(f,g)+1},
\]
then there exist $n,m\in\N$ so that $f^n\circ g^m$ is elliptic.
\end{proposition}

\begin{proof}
Since $C(f,g)>1$, the axes $\mathrm{Ax}(f)$ and $\mathrm{Ax}(g)$, of $f$ and $g$ respectively, are disjoint. Let $d>0$ be the hyperbolic distance between $\mathrm{Ax}(f)$ and $\mathrm{Ax}(g)$. The proof revolves around evaluating the trace of the composition of $f$ and $g$, which is given by the following equation found in \cite[Theorem 7.38.3]{Be1995}:
\begin{equation}\label{eq: antiparallel trace eq}
\frac{1}{2}\lvert\text{tr}(f\circ g)\rvert=\lvert\cosh d\sinh\tfrac{1}{2}T_f\sinh\tfrac{1}{2}T_g-\cosh\tfrac{1}{2}T_f\cosh\tfrac{1}{2}T_g\rvert,
\end{equation}
where $T_f,T_g$ are the translation lengths of $f$ and $g$, respectively. Let $\mathbb{R}_+^2$ be the first quadrant of $\mathbb{R}^2$, endowed with the Euclidean topology, and consider the function $h\colon \R_+^2\to\R$, with 
\begin{equation}\label{eq: antiparallel def of h}
h(x,y)=\cosh d\sinh x\sinh y-\cosh x\cosh y.
\end{equation}
So, equation \eqref{eq: antiparallel trace eq} can be rewritten as $\frac{1}{2}\lvert\text{tr}(f\circ g)\rvert = \rvert h\left(\tfrac{1}{2}T_f,\tfrac{1}{2}T_g\right)\rvert$. Also, from \eqref{eq: translation length of iterates} we have that $T_{f^n}=n\ T_f$, meaning that
\begin{equation}\label{n,m-traces}
\frac{1}{2}\lvert\text{tr}(f^n\circ g^m)\rvert=\left\lvert h\left(\tfrac{n}{2}T_f,\tfrac{m}{2}T_g\right) \right\rvert, \quad \text{for all}\ n,m\in\N.
\end{equation}
Define the open set $D=\left\{(x,y)\in\mathbb{R}_+^2\colon -1<h(x,y)<1\right\}$. Since a transformation $h$ is elliptic if and only if $\lvert \mathrm{tr}(h)\rvert <2$, we have that the composition $f^n\circ g^m$ is elliptic, for some $n,m\in\N$, if and only if $\left\lvert h\left(\tfrac{n}{2}T_f,\tfrac{m}{2}T_g\right) \right\rvert<1$, or equivalently $\left(\tfrac{n}{2}T_f,\tfrac{m}{2}T_g\right)\in D$.\\
So, our goal is to show that if 
\[
\max\left\{\tfrac{1}{2}\lvert \mathrm{tr}(f)\rvert, \tfrac{1}{2}\lvert \mathrm{tr}(g)\rvert\right\}\leq \ \frac{5C(f,g)-1}{3C(f,g)+1},
\]
then there exist integers $n,m\geq1$, so that $\left(\tfrac{n}{2}T_f,\tfrac{m}{2}T_g\right)\in D$.\\
Let $a,b>0$ be such that
\begin{align}\label{sinh}
&\sinh b =\frac{\sqrt{2}}{\sqrt{\cosh d-1}} &\text{and} & &\sinh a= \frac{\sqrt{2}}{(2\cosh d+1)\sqrt{\cosh d-1}}.
\end{align}
These are well-defined since $d>0$. Also, \eqref{sinh} are equivalent to
\begin{align}\label{cosh}
&\cosh b =\sqrt{\frac{\cosh d+1}{\cosh d-1}} &\text{and} & &\cosh a= \frac{(2\cosh d-1)}{(2\cosh d+1)}\sqrt{\frac{\cosh d+1}{\cosh d-1}}.
\end{align}
Observe that $b>a$, since $\sinh b > \sinh a$. Let $S$ be the square with vertices $(a,b), (b,b), (b,a), (a,a)$ and $C$ the open region in $\mathbb{R}^2_+$ bounded by $S$. That is, $\overline{C}=S\cup C$. Our main task is to show that $\overline{C}\setminus\{(a,b), (b,b), (b,a)\}\subset D$, as shown in Figure~\ref{square}.\\
\begin{figure}[h]
\centering
\begin{tikzpicture}
    \node[anchor=south west,inner sep=0] at (0,0) {\includegraphics[scale=3.2]{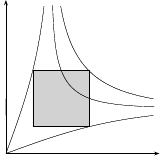}};
    \node[above right] at (4.825,4.825) {$(b,b)$};
    \node[above left, inner sep=1pt] at (1.80061319411490,4.82185316399136) {$(a,b)$};
	\node[below right, inner sep=1pt] at (4.82185316399136, 1.80061319411490) {$(b,a)$};
    \node[below left] at (1.80061319411490,1.80061319411490) {$(a,a)$};
    
	\node[below right] at (2.25,5.5) {$D$};    
	\node at (3.31123317905313,3.31123317905313) {$C$};
	
	\node[above, rotate=-17.2] at (7,3.6) {$h(x,y)=1$};
	\node[above, rotate=-7] at (6.5,2.93) {$h(x,y)=0$};
	\node[below, rotate=8] at (7.3,2.3) {$h(x,y)=-1$};
	\node[above, rotate=82] at (2.3,7.3) {$h(x,y)=-1$};
	
	\node at (4.82185316399136, 1.80061319411490)[circle,fill,inner sep=1pt]{};    
    \node at (1.80061319411490,4.82185316399136)[circle,fill,inner sep=1pt]{};
    \node at (1.80061319411490,1.80061319411490)[circle,fill,inner sep=1pt]{};
    \node at (4.82185316399136,4.82185316399136)[circle,fill,inner sep=1pt]{};
\end{tikzpicture}
\caption{The regions $C$ and $D$}\label{square}
\end{figure}
We start by showing that the edges of $S$, without the vertices $(a,b), (b,b), (b,a)$, are contained in $D$. We shall make heavy use of the fact that the line $y=x$ is an axis of symmetry for $h$; i.e. $h(x,y)=h(y,x)$, for all $x,y>0$.\\
Using the formula for $h$ in \eqref{eq: antiparallel def of h} and equations \eqref{sinh}, \eqref{cosh} we can evaluate $h$ on the vertices of $S$ as follows:
\begin{align}
h(a,a)&=\cosh d \sinh^2a-\cosh^2a=(\cosh d -1)\sinh^2a-1=\frac{2}{(2\cosh d+1)^2}-1, \label{eq: antiparallel haa}\\
h(b,b)&= \cosh d \sinh^2b - \cosh^2b= \cosh d \frac{2}{\cosh d-1} - \frac{\cosh d +1}{\cosh d-1}=1,\label{eq: antiparallel hbb}
\end{align}
and
\begin{align}\label{hab}
h(b,a)=h(a,b) &= \cosh d \sinh a \sinh b - \cosh a \cosh b=\frac{2\cosh d - (2\cosh d-1)(\cosh d+1)}{(2\cosh d+1)(\cosh d-1)}=\nonumber \\
&=\frac{-2\cosh^2 d+\cosh d+1}{(2\cosh d+1)(\cosh d-1)}=-1.
\end{align}
Observe that \eqref{eq: antiparallel haa} implies that 
\begin{equation}\label{eq: antiparallel haa 2}
-1<h(a,a)<-\frac{7}{9}.
\end{equation}
Now consider the partial derivative
\begin{equation}\label{increasing}
\frac{\partial h}{\partial x}(x,b) = \cosh d \cosh x \sinh b - \sinh x \cosh b, \quad \text{for} \quad x>0.
\end{equation}
Equations \eqref{sinh} and \eqref{cosh} yield
\[
\frac{1}{\cosh d}\ \frac{\cosh b}{\sinh b}=\frac{\sqrt{\cosh d +1}}{\sqrt{2}\ \cosh d}<1<\frac{\cosh x}{\sinh x},\quad \text{for all}\: x>0.
\]
Applying this inequality to \eqref{increasing}, we obtain that $h$ is strictly increasing on the horizontal line $y=b$. This implies that 
\[
-1=h(a,b)<h(x,b)<h(b,b)=1,\quad \text{for any}\: x\in(a,b),
\]
meaning that $h(x,b)\in D$, for all $x\in(a,b)$. This proves that the horizontal edge of $S$ joining $(a,b)$ and $(b,b)$ (without these vertices) is contained in $D$. By the symmetry of $h$ we also obtain that the vertical edge of $S$ joining $(b,a)$ and $(b,b)$ (excluding the vertices again) is contained in $D$.\\
We now turn to the vertical edge of $S$ joining $(a,a)$ and $(a,b)$. Consider the partial derivative
\[
\frac{\partial h}{\partial y}(a,y) = \cosh d \sinh a \cosh y - \cosh a \sinh y, \quad \text{for}\quad y>0.
\]
Since $\frac{\partial h}{\partial y}(a,a)= (\cosh d -1) \sinh a \cosh a>0$, by continuity there exists $y_+\in(a,b)$, close enough to $a$, so that $\frac{\partial h}{\partial y}(a,y_+)>0$.\\
Suppose that $\frac{\partial h}{\partial y}(a,y)\neq0$ for all $y\in(a,b)$. Then, again by continuity and the fact that $\frac{\partial h}{\partial y}(a,y_+)>0$, we have that $\frac{\partial h}{\partial y}(a,y)>0$,  for all $y\in(a,b)$, which implies that $h(a,y)$ is increasing in $(a,b)$. But $h(a,a)>-1=h(a,b)$, from \eqref{hab} and \eqref{eq: antiparallel haa 2}, which is a contradiction.\\
We conclude that $\frac{\partial h}{\partial y}(a,y_0)=0$, for some $y_0\in(a,b)$. That is
\begin{align*}
&\frac{\partial h}{\partial y}(a,y_0)=\cosh d \sinh a \sinh y_0 - \cosh a \sinh y_0=0 & \iff& &\coth y_0 = \frac{\coth a}{\cosh d }.
\end{align*}
If $0<y<y_0$, then
\begin{align*}
&\coth y >\coth y_0 = \frac{\coth a}{\cosh d } & &\iff& &\cosh d \cosh y \sinh a > \cosh a \sinh y & &\iff& &\frac{\partial h}{\partial y}(a,y)>0.
\end{align*}
Similarly we can show that if $y>y_0$, then $\frac{\partial h}{\partial y}(a,y)<0$. Therefore the maximum of $h(a,y)$, for all $y>0$, is attained only at $y_0$. This means that 
\[
\min\{h(a,a), h(a,b)\} < h(a,y) \leq h(a,y_0),\quad \text{for all}\ y\in(a,b).
\]
But, $h(a,a)>-1=h(a,b)$, again by \eqref{hab} and \eqref{eq: antiparallel haa 2}, and so $h(a,y)>-1$, for all $(a,b)$. In order to conclude that $h(a,y)\in D$, for all $y\in(a,b)$, it suffices to show that $h(a,y_0)<0$, which we now prove.
\begin{align*}
h(a,y_0)<0 &\iff \cosh d \sinh a \sinh y_0 <\cosh a \cosh y_0 \\
&\iff \cosh d < \coth a \coth y_0= \coth a\ \frac{\coth a}{\cosh d }\\
&\iff \cosh d <\coth a.
\end{align*}
But, equations \eqref{sinh} and \eqref{cosh} yield that
\[
\coth a  = \frac{(2\cosh d-1)\sqrt{\cosh d+1}}{\sqrt{2}}=\frac{(\cosh d + \cosh d -1)\sqrt{\cosh d+1}}{\sqrt{2}} >\frac{\cosh d\sqrt{2}}{\sqrt{2}}=\cosh d,
\]
since $d>0$. This concludes the proof of the fact that $h(a,y)\in D$, for all $y\in(a,b)$, which implies that the vertical edge of $S$ joining $(a,a)$ and $(a,b)$ (apart from the vertex $(a,b)$) lies in $D$. Using the symmetry of $h$, again, we obtain that the horizontal edge of $S$ joining $(a,a)$ and $(b,a)$ (apart from the vertex $(b,a)$) lies in $D$.\\
So, as we claimed, all edges of $S$ lie in $D$, apart from the vertices $(a,b),(b,b),(b,a)$, which lie on the boundary of $D$, since $h(a,b)=h(b,a)=-1$ and $h(b,b)=1$. With these facts in mind, we move on to the proof of the inclusion $C\subset D$.\\
First, a simple computation shows that the function $h(x,x)$ is strictly increasing for all $x>0$. Therefore, recalling \eqref{eq: antiparallel def of h} and \eqref{eq: antiparallel hbb}, we get that 
\begin{equation}\label{eq: anitparallel y=x}
-1=\lim_{t\to0^+}h(t,t)<h(x,x)<h(b,b)=1,\quad \text{for all}\quad 0<x<b.
\end{equation}
This means that $C\cap\{(x,x)\colon x>0\}\subset D$. Now, fix $x_0>0$ and consider the derivative
\begin{equation}\label{eq: antiparallel final der}
\frac{\partial h}{\partial y}(x_0,y)=\cosh d \sinh x_0 \cosh y- \cosh x_0 \sinh y.
\end{equation}
Since the hyperbolic tangent is an increasing function, we obtain that for $y<x_0$
\begin{equation}\label{eq: antiparallel final der 2}
\tanh y < \tanh x_0 < \cosh d \tanh x_0.
\end{equation}
Combining \eqref{eq: antiparallel final der} and \eqref{eq: antiparallel final der 2} yields that $h(x_0,y)$ is strictly increasing for all $y\in(0,x_0)$, and any $x_0>0$. So, if $x_0\in(a,b)$ then
\[
-1<h(x_0,a)<h(x_0,y)<h(x_0,x_0)<1,
\]
where the inequality $-1<h(x_0,a)$ follows from the fact that $(x_0,a)$ lies on an edge of $S$ and inequality $h(x_0,x_0)<1$ is \eqref{eq: anitparallel y=x} applied to $x_0$. We conclude that the vertical line segment joining $(x_0,a)$ and $(x_0,x_0)$, which is contained in $C$, lies in $D$ for all $x_0\in(a,b)$. Using the symmetry of $h$ one last time, we obtain that $C\subset D$, as claimed.\\
To finish the proof of the proposition, note that since the edges of $S$ have length $b-a$, if a point $(x,y)\in\R_+^2$ is such that $\max\{x,y\}<b-a$, then there exist integers $n,m\geq1$ such that $(nx,my)\in C\subset D$. Therefore, recalling equation \eqref{n,m-traces}, we have that if
\[
\max\left\{\tfrac{1}{2}T_f, \tfrac{1}{2}T_g \right\}< b-a,
\]
then there exist integers $n,m\geq1$ such that $f^n\circ g^m$ is elliptic. Next, assume that $(x,y)\in\R_+^2$ is such that $\max\{x,y\}=b-a$. If $x<y$ (or, similarly, $y<x$), then there exist $n,m\geq1$ so that $(nx,my)\in \overline{C}\setminus\{(a,b),(b,b),(b,a)\}\subset D$ and we are done once again. On the other hand, if $x=y=b-a$, then since $b-a<b$ we can use \eqref{eq: anitparallel y=x} to deduce that $-1<h(b-a,b-a)<1$, meaning that $(x,y)\in D$.\\
We have thus shown that if 
\[
\max\left\{\tfrac{1}{2}T_f, \tfrac{1}{2}T_g \right\}\leq b-a,
\]
then there exist positive integers $n,m\geq1$ so that $f^n\circ g^m$ is elliptic. As such, the proof of this proposition will now be complete upon showing that
\[
\max\left\{\tfrac{1}{2}T_f, \tfrac{1}{2}T_g \right\}\leq b-a \quad \iff \quad \max\left\{ \tfrac{1}{2}\lvert\mathrm{tr}(f)\rvert, \tfrac{1}{2}\lvert \mathrm{tr}(g)\rvert\right\}\leq \frac{5\ C(f,g)-1}{3\ C(f,g)+1}.
\]
Recall that from \eqref{eq: trace and translation length} we have that $\cosh \left(\tfrac{1}{2}T_f\right) = \tfrac{1}{2} \lvert \mathrm{tr}(f)\rvert$ and so we only need to show that
\begin{equation}\label{eq: antiparallel end goal}
\cosh (b-a) = \frac{5C(f,g)-1}{3C(f,g)+1}.
\end{equation}
But, $\cosh(b-a) = \cosh b\cosh a - \sinh b\sinh a$, and all quantities on the right-hand side are given by equations \eqref{sinh} and \eqref{cosh}. So,
\begin{align}
\cosh(b-a) &= \frac{(2\cosh d -1)}{(2\cosh d+1)}\frac{(\cosh d+1)}{(\cosh d-1)}  - \frac{2}{(2\cosh d+1)(\cosh d-1)}\nonumber \\
	&=\frac{2\cosh^2 d +\cosh d-3}{(2\cosh d+1)(\cosh d-1)}\nonumber \\
	&=\frac{2\cosh d+3}{2\cosh d+1}. \label{coshba}
\end{align}
Now, by Corollary~\ref{cross-coro} (ii) and the fact that $C(f,g)>1$, we have that $\cosh d = \frac{C(f,g)+1}{C(f,g)-1}$ which when substituted to \eqref{coshba} yields \eqref{eq: antiparallel end goal}.
\end{proof}

Some of the arguments in the proof of Proposition \ref{anti} can be used to prove the following result, which will be used in the proof of Theorem \ref{thm: equal trace} in the next section. 

\begin{lemma}\label{lem: equal trace lemma}
Assume that $f,g\in\PSL$ are hyperbolic transformations with $C(f,g)>1$, and such that $\lvert \tr(f) \rvert =\lvert \tr(g) \rvert=t$. If $\tfrac{1}{2}t\leq \sqrt{C(f,g)}$, then the transformation $f\circ g$ is either elliptic or parabolic.
\end{lemma}

\begin{proof}
Since $C(f,g)>1$ the axes of $f,g$ are disjoint and a hyperbolic distance $d>0$ apart. Also, from \eqref{eq: trace and translation length} and our assumption, we have that 
\[
\tfrac{1}{2}t=\tfrac{1}{2}\lvert \tr(f)\rvert =\cosh \tfrac{1}{2} T_f = \tfrac{1}{2}\lvert \tr(g) \rvert =\cosh \tfrac{1}{2} T_g.
\]
This means that $T_f=T_g$, so let $T$ be this common value.\\
As in the proof of Proposition \ref{anti}, we have that 
\begin{equation}\label{eq: equal trace eq1}
\frac{1}{2}\lvert \tr(f\circ g)\rvert= \lvert h\left(\tfrac{1}{2}T_f,\tfrac{1}{2}T_g\right)\rvert= \lvert h\left(\tfrac{1}{2}T,\tfrac{1}{2}T\right)\rvert,
\end{equation}
where $h(x,y)=\cosh d\sinh x\sinh y-\cosh x\cosh y$. In \eqref{eq: anitparallel y=x} we showed that $-1<h(x,x)<h(b,b)=1$, for all $x\in(0,b)$, where $b>0$ satisfies 
\begin{equation}\label{eq: equal trace eq2}
\cosh b =\sqrt{\frac{\cosh d+1}{\cosh d-1}}.
\end{equation}
So, \eqref{eq: equal trace eq1} tells us that if $\tfrac{1}{2}T\leq b$ we have that $f\circ g$ is either elliptic (when $T<b$) or parabolic (when $T=b$). Equivalently, by \eqref{eq: equal trace eq2}, the inequality $\tfrac{1}{2}T\leq b$ can be written as
\begin{equation}\label{eq: equal trace eq3}
\tfrac{1}{2}t=\cosh\tfrac{1}{2}T\leq \cosh b = \sqrt{\frac{\cosh d+1}{\cosh d-1}}.
\end{equation}
But, from Corollary \ref{cross-coro} (ii), and the fact that $C(f,g)>1$, we have that 
\[
\cosh d = \frac{C(f,g)+1}{C(f,g)-1},
\]
which when substituted to \eqref{eq: equal trace eq3} yields the desired result. 
\end{proof}

Before proceeding to the proof of Theorem \ref{thm: anti classification}, we require some additional results. Let us first mention that an elliptic matrix $A\in\SL$ is \emph{of finite order} if $A^m=I$ for some $m\in\N$, where $I$ is the $2\times2$ identity matrix, and \emph{of infinite order} otherwise. This definition extends to elliptic transformations of $\PSL$ in the obvious way. 

The next result is \cite[Theorem 3]{BBC1996}.

\begin{theorem}[\cite{BBC1996}]\label{thm: beardon dense}
If $f,g\in\PSL$ are non-commuting transformations, and $f$ is elliptic of infinite order, then the semigroup under function composition generated by $\{f,g\}$ is dense in $\PSL$.
\end{theorem}

We also require the following standard result on discrete subgroups of $\PSL$ (see, for example, \cite[Theorem 8.4.1]{Be1995}).

\begin{theorem}\label{thm: non-discrete group}
A non-elementary, non-discrete subgroup of $\PSL$ contains an elliptic transformation of infinite order.
\end{theorem}

We should mention that a subgroup $G$ of $\PSL$ is called \emph{non-elementary} if the orbit $G(z)=\{g(z)\colon g\in G\}$ is infinite for any $z\in\H\cup\overline{\R}$. For our purposes it suffices to note that $G$ is non-elementary whenever it contains two hyperbolic transformations with pairwise distinct fixed points (i.e. with well-defined and non-zero cross-ratio).

Finally, we have a simple result from linear algebra. We provide a short proof for the sake of completeness. 

\begin{lemma}\label{lem: elementary density result}
For any matrices $A,B\in\SL$, the following are equivalent:
\begin{enumerate}
\item The semigroup generated by $\{A,B,-A,-B\}$ is dense in $\SL$.
\item The semigroup generated by $\{A,B\}$ is dense in $\SL$.
\end{enumerate}
\end{lemma}

\begin{proof}
The implication ``(ii) implies (i)" is trivial, since the semigroup generated by $\{A,B\}$ is a subsemigroup of the semigroup generated by $\{A,B,-A,-B\}$.\\
Suppose that the semigroup generated by $\{A,B,-A,-B\}$ is dense in $\SL$. For simplicity, write $S^{\pm}$ for the semigroup generated by $\{A,B,-A,-B\}$, and $S$ for the semigroup generated by $\{A,B\}$. Consider the matrix 
\[
C=\begin{pmatrix}0 & -1\\ 1 &0 \end{pmatrix}\in\SL.
\]
Note that $C^2=-I$, where $I$ is the $2\times 2$ identity matrix. Since $S^{\pm}$ is dense, we can find a sequence $(C_n)\subset S^{\pm}$ that converges to $C$. Observe that either $C_n\in S$ for infinitely many $n\in\N$, or $-C_n\in S$ for infinitely many $n\in\N$. In any case, the sequence $(C_n^2)$ lies in $S$ and converges to $C^2=-I$.\\
Let now $E$ be any matrix in $\SL$, and let $(E_n)\subset S^{\pm}$ be a sequence converging to $E$. If $(E_n)$ lies in $S$ for infinitely many $n\in\N$, we are done. Otherwise, there exists a subsequence $(E_{n_k})$ such that $(-E_{n_k})\subset S$. Then, the sequence $(C_k^2(-E_{n_k}))\subset S$ converges to $-I(-E)=E$.
\end{proof}

We are now ready to prove Theorem \ref{thm: anti classification}. 

\begin{proof}[Proof of Theorem\ref{thm: anti classification}]
Suppose that $A,B\in\SL$ are hyperbolic matrices with $C(A,B)>1$. Moreover, assume that 
\[
\max\left\{\tfrac{1}{2}\lvert \mathrm{tr}(A)\rvert, \tfrac{1}{2}\lvert \mathrm{tr}(B)\rvert\right\}\leq \ \frac{5\ C(A,B)-1}{3\ C(A,B)+1}.
\]
If $f_A,f_B\in\PSL$ are the hyperbolic transformations induced by $A$ and $B$, then Proposition \ref{anti} is applicable to $f_A,f_B$ and yields that $f_A^n\circ f_B^m$ is an elliptic transformation for some $n,m\in\N$. Equivalently, the matrix $E=A^nB^m$ is elliptic, and write $f_E$ for the elliptic transformation induced by $E$; that is, $f_E=f_A^n\circ f_B^m$. We consider two cases, depending on the order of $E$.\\
First, assume that $E$ is of infinite order. We are going to show that the semigroup generated by $\{A,B\}$ is dense in $\SL$. Note that it suffices to show that the semigroup generated by $\{A,E\}$ is dense in $\SL$. Because $C(f_A,f_B)=C(A,B)>1$, the transformations $f_A$ and $f_B$ do not commute, and so neither do $f_A$ and $f_E$. Thus, Theorem \ref{thm: beardon dense} is applicable to $f_A,f_E$ and yields that the semigroup generated by the transformations $\{f_A,f_E\}$ is dense in $\PSL$. This is implies that the semigroup generated by the matrices $\{A,E,-A,-E\}$ is dense in $\SL$, and hence by Lemma \ref{lem: elementary density result} so is the semigroup generated by $\{A,E\}$.\\
Next, we assume that $E$ has finite order; i.e. $E^k=I$, for some $k\in\N$. This means that $A^nB^mA^nB^m\cdots A^nB^m=I$, which immediately implies that $A^{-1},B^{-1}$ lie in the semigroup generated by $\{A,B\}$. Hence, the semigroup generated by $\{A,B\}$ is in fact a group. If this group is discrete, we are done.  Otherwise, the semigroup generated by $\{f_A,f_B\}$ is also a group, which is non-discrete and non-elementary (again because $C(f_A,f_B)>1$). So, using Theorem \ref{thm: non-discrete group} we get that the semigroup generated by $\{f_A,f_B\}$ contains an elliptic transformation of infinite order and we can follow the arguments we presented in the previous paragraph to obtain the desired conclusion. 
\end{proof}

\section{Geometric conditions on pairs}\label{sect: constraints on pairs}

In preparation for the proof of Theorem \ref{MAIN}, that will take place in the next section, we prove certain conditions on pairs of hyperbolic transformations that guarantee domination. This section also contains the proof of Theorem \ref{thm: equal trace}.

Let us first state some additional properties of hyperbolic transformations. Any hyperbolic transformation $f$ can be written as a composition $f=\sigma_1\circ \sigma_2$, where each $\sigma_i$ is the reflection in a hyperbolic geodesic $\gamma_i$ that is perpendicular to the axis $\mathrm{Ax}(f)$, for $i=1,2$. The geodesics $\gamma_1$ and $\gamma_2$ are not unique, but the distance between any such geodesics is fixed and equal to half the translation length; that is
\[
d_\H(\gamma_1,\gamma_2)=\tfrac{1}{2}T_f.
\]
We will say that an interval $I\subset\overline{\mathbb{R}}$ is \emph{symmetric with respect to} $f$ if the unique hyperbolic geodesic with endpoints at the boundary of $I$ is perpendicular to the axis $\mathrm{Ax}(f)$. 

The following observation will prove important in what follows:

\begin{remark}\label{rem: rem}
Let $f$ be a hyperbolic transformation and $M>0$ a constant so that $T_f> M$. Then, we can find intervals $U_f,S_f\subset \overline{\R}$, depending on $M$, with the following properties:
\begin{enumerate}
\item $U_f$ and $S_f$ are symmetric with respect to $f$,
\item $f\left(S_f^c\right)\subset U_f$, where $S_f^c=\overline{\R}\setminus S_f$,
\item if $\gamma_U$ is the hyperbolic geodesic of $\H$ with the same endpoints as $U_f$, and $\gamma_S$ is the hyperbolic geodesic of $\H$ with the same endpoints as $S_f$, then $d_\H(\gamma_U,\gamma_S)=M$.
\end{enumerate}
\end{remark}

\begin{proposition}\label{newlem}
Let $f,g$ be two hyperbolic transformations with $C(f,g)>0$. If
\[
\min\left\{\tfrac{1}{2}\lvert \mathrm{tr}(f)\rvert, \tfrac{1}{2}\lvert \mathrm{tr}(g)\rvert\right\} > \max\left\{\sqrt{C(f,g)},\frac{1}{\sqrt{C(f,g)}}\right\},
\]
then there exist open intervals $U_f, S_f,U_g,S_g$ in $\overline{\mathbb{R}}$, with pairwise disjoint closures, that satisfy the following properties:
\begin{enumerate}
\item The intervals $U_f,S_f$ are symmetric with respect to $f$ and $f(S_f^c)$ is contained in $U_f$; and,
\item the intervals $U_g,S_g$ are symmetric with respect to $g$ and $g(S_g^c)$ is contained in $U_g$.
\end{enumerate}
In particular, $\{f,g\}$ is dominated.
\end{proposition}

\begin{proof}
Let $f,g$ be hyperbolic transformations as in the statement of the theorem. Observe that if we prove the existence of the intervals $U_f,S_f,U_g,S_g$, then  Theorem \ref{aby} immediately tells us that $\{f,g\}$ is dominated. Since $C(f,g)>0$ the axes of $f$ and $g$ are disjoint and a hyperbolic distance $d>0$ apart. We first consider the case where $C(f,g)>1$. By assumption we have that
\begin{equation}\label{eq: newlem proof eq1}
\min\left\{\tfrac{1}{2}\lvert \mathrm{tr}(f) \rvert, \tfrac{1}{2}\lvert \mathrm{tr}(g) \rvert \right\}> \sqrt{C(f,g)}=\max\left\{\sqrt{C(f,g)},\frac{1}{\sqrt{C(f,g)}}\right\}.
\end{equation}
Equation \eqref{eq: trace and translation length} tells us that $\tfrac{1}{2}\lvert \tr(f)\rvert =\cosh \tfrac{1}{2}T_f$, and similarly for $g$. Thus, \eqref{eq: newlem proof eq1} is equivalent to
\[
\min\left\{\cosh\tfrac{1}{2}T_f, \cosh\tfrac{1}{2}T_g \right\}> \sqrt{C(f,g)},
\]
which in turn can be rewritten as 
\begin{equation}\label{eq: newlem proof eq2}
\min\left\{\sinh\tfrac{1}{2}T_f, \sinh\tfrac{1}{2}T_g \right\}> \sqrt{C(f,g)-1}.
\end{equation}
Recall that Lemma~\ref{crl} tell us that $C(f,g)=\coth^2\tfrac{1}{2}d$, meaning that $\frac{1}{\sinh^2\tfrac{1}{2}d}=C(f,g)-1$. We are thus led to 
\begin{equation}\label{eq: newlem proof eq3}
\min\left\{\sinh\tfrac{1}{2}T_f, \sinh\tfrac{1}{2}T_g \right\}> \frac{1}{\sinh \tfrac{1}{2}d}.
\end{equation}
Now, let $\ell$ be the unique hyperbolic geodesic that is perpendicular to $\mathrm{Ax}(f)$ and $\mathrm{Ax}(g)$, and let $\sigma$ the reflection in $\ell$. Also, define the reflections $\sigma_f=f\circ \sigma$ and $\sigma_g=\sigma\circ  g$ and let $\ell_f, \ell_g$ be their geodesics of reflection respectively. Observe that $\ell_f$ is also perpendicular to $\mathrm{Ax}(f)$ and $\ell_g$ is perpendicular to $\mathrm{Ax}(g)$ (see Figure \ref{fig-anti}).\\
To fully justify the configuration shown in Figure \ref{fig-anti}, we first claim that the geodesic $\ell_f$ does not intersect $\mathrm{Ax}(g)$ in $\overline{\H}$ and $\ell_g$ does not intersect $\mathrm{Ax}(f)$. Assuming that the geodesic $\ell_f$ meets $\mathrm{Ax}(g)$ in $\overline{\mathbb{H}}$ at an angle $\phi\in [0,\pi/2)$ we can see that the geodesics $\ell_f,\mathrm{Ax}(f),\ell$ and $\mathrm{Ax}(g)$ form a quadrilateral in $\mathbb{H}$ that has three right angles and the fourth is $\phi$. Moreover, $\phi$ cannot be $\pi/2$ because then $\ell_f$ would coincide with $\ell$. Applying \cite[Theorem 7.17.1 (i)]{Be1995} to this quadrilateral implies that 
\[
\sinh\tfrac{1}{2}T_f = \sinh d_\H(\ell,\ell_f)=\frac{\cos\phi}{\sinh d }\leq \frac{1}{\sinh\tfrac{1}{2} d},
\]
which directly contradicts \eqref{eq: newlem proof eq3}. We reach the same contradiction if we assume that $\ell_g$ intersects $\mathrm{Ax}(f)$ in $\overline{\H}$.\\
Next, we show that the geodesics $\ell_f,\ell_g$ do not intersect in $\overline{\H}$. Working towards a contradiction we suppose that  $\ell_f$ and $\ell_g$ meet in $\overline{\mathbb{H}}$ at an angle $\theta \in[0,\pi/2]$. Then, because the pairs $\ell_f,\mathrm{Ax}(g)$ and $\ell_g,\mathrm{Ax}(f)$ are disjoint, we get that the geodesics $\ell,\ell_f,\ell_g,\mathrm{Ax}(f), \mathrm{Ax}(g)$ define a pentagon in $\overline{\mathbb{H}}$ with four right angles and the fifth being $\theta$. Using \cite[Theorem 7.18.1]{Be1995} we get
\begin{equation}\label{eq: newlem proof eq4}
\sinh\tfrac{1}{2}T_f\sinh\tfrac{1}{2}T_g\cosh d - \cosh\tfrac{1}{2}T_f\cosh\tfrac{1}{2}T_g = \cos\theta\leq1.
\end{equation}
Consider the function $h\colon \mathbb{R}^2\to \mathbb{R}$, with $h(x,y)=\sinh x\sinh y\cosh d-\cosh x\cosh y$, and let $b\in\mathbb{R}$ be such that
\[
\sinh b=\frac{1}{\sinh \tfrac{1}{2}d}=\sqrt{\frac{2}{\cosh d-1}}.
\]
The function $h$ and the number $b$ also appeared in the proof of Proposition~\ref{anti}, where we showed that $h(x,b)$ is increasing for all $x>0$ (see the arguments succeeding \eqref{increasing}), and $h(b,b)=1$ (see \eqref{eq: antiparallel hbb}). Also, $h(x,x)=\cosh d \sinh^2x -\cosh^2x$ is an increasing function of $x$. Combining these facts we obtain that $h(x,b),h(x,x)>1$, for all $x>b$. Let us fix $x_0>b$. Now, again in the proof of Proposition~\ref{anti} we showed that $h(x_0,y)$ is increasing for all $y\in(0,x_0)$ (see the arguments succeeding \eqref{eq: antiparallel final der}). We thus have that
\[
h(x_0,y)>h(x_0,b)>1,\quad\text{for all}\ y\in (b,x_0).
\]
Since $h(x,y)=h(y,x)$, we can conclude that all points $(x,y)\in\mathbb{R}^2$ with $\min\{x,y\}>b$ satisfy $h(x,y)>1$. Now note that by \eqref{eq: newlem proof eq3} we have that $\tfrac{1}{2}T_f,\tfrac{1}{2}T_g>b$, meaning that $h\left(\tfrac{1}{2}T_f,\tfrac{1}{2}T_f\right)>1$, contradicting \eqref{eq: newlem proof eq4}.\\
To sum up, we showed that $\ell_f$ is disjoint with $\mathrm{Ax}(g)$ and $\ell_g$, and similarly $\ell_g$ is disjoint with $\mathrm{Ax}(f)$ and $\ell_f$. Hence, if we denote by $U_f$ the open interval in $\overline{\mathbb{R}}$ that has the same endpoints as $\ell_f$ and contains $u(f)$, and by $I_g$ the open interval that has the same endpoints as $\ell_g$ and contains $s(g)$, we obtain that $\overline{U_f}\cap \overline{I_g}=\emptyset$. Also, let $I_f=\sigma(U_f)$ and $U_g=\sigma(I_g)$. Since $f=\sigma_f\circ\sigma$ and $g = \sigma\circ \sigma_g$, we have that $f(I_f^c)= \overline{U_f}$ and $g(I_g^c)= \overline{U_g}$. It is now easy to see that we can choose slightly larger intervals $I_f\subset S_f$ and $I_g\subset S_g$ so that $U_f,S_f,U_g,S_g$ all have disjoint closures, $f(S_f^c)\subset U_f$ and $g(S_g^c)\subset U_g$, as required. This concludes our proof for the case $C(f,g)>1$.\\
Recall that because $f,g$ are hyperbolic, we have that $C(f,g)\neq1$. But, for the scenario $0<C(f,g)<1$, we can obtain the desired intervals by applying the first case to the pair $\{f^{-1},g\}$, because $C(f^{-1},g)=1/C(f,g)>1$.
\end{proof}

\begin{figure}[ht]
\centering
\begin{tikzpicture}

\begin{scope}[scale = 2.5,decoration={
    markings,
    mark=at position 0.39 with {\arrow{To}}}
    ] 
\draw [line width=0.4pt] (0.,0.) circle (1.cm);
\draw [shift={(-1.8312065528259258,0.)},line width=0.4pt, postaction={decorate}]  plot[domain=0.5776873665141405:-0.5776873665141409,variable=\t]({1.*1.5340526194080204*cos(\t r)+0.*1.5340526194080204*sin(\t r)},{0.*1.5340526194080204*cos(\t r)+1.*1.5340526194080204*sin(\t r)});

\draw [shift={(-0.5460880414919854,-0.9365883317794479)},line width=0.4pt]  plot[domain=-0.13124699917441784:2.2170973274583603,variable=\t]({1.*0.4188195939613624*cos(\t r)+0.*0.4188195939613624*sin(\t r)},{0.*0.4188195939613624*cos(\t r)+1.*0.4188195939613624*sin(\t r)});
\draw [shift={(0.5460880414919863,-0.9365883317794478)},line width=0.4pt]  plot[domain=0.9244953261314339:3.2728396527642105,variable=\t]({1.*0.41881959396136353*cos(\t r)+0.*0.41881959396136353*sin(\t r)},{0.*0.41881959396136353*cos(\t r)+1.*0.41881959396136353*sin(\t r)});

\draw[line width=.4pt] (-1,0) -- (1,0);
\end{scope}

\begin{scope}[scale = 2.5,decoration={
    markings,
    mark=at position 0.635 with {\arrow{To}}}
    ] 
\draw [shift={(1.8312065528259265,0.)},line width=0.4pt, postaction={decorate}]  plot[domain=3.719280020103933:2.563905287075653,variable=\t]({1.*1.5340526194080215*cos(\t r)+0.*1.5340526194080215*sin(\t r)},{0.*1.5340526194080215*cos(\t r)+1.*1.5340526194080215*sin(\t r)});

\node [above] at (-.7,0) {$\ell$};
\node [right] at (-.3,.2) {$f$};
\node [left] at (.3,.2) {$g$};
\node [above] at (-.7, -.55) {$\ell_f$};
\node [above] at (.7,-.55) {$\ell_g$};
\end{scope}
\end{tikzpicture}
\caption{The hyperbolic transformations in Proposition \ref{newlem}.}
\label{fig-anti} 
\end{figure}

Using Proposition \ref{newlem} and Lemma \ref{lem: equal trace lemma} we can now prove Theorem \ref{thm: equal trace}.

\begin{proof}[Proof of Theorem \ref{thm: equal trace}]
We assume that $f,g$ are hyperbolic transformations with $C(f,g)>1$ and $\lvert \tr(f)\rvert=\lvert \tr(g)\rvert=t$. Our goal is to show that $\{f,g\}$ is dominated if and only if $\tfrac{1}{2}t>\sqrt{C(f,g)}$.\\
By Lemma \ref{lem: equal trace lemma} we have that if $\tfrac{1}{2} t\leq \sqrt{C(f,g)}$ then $f\circ g$ is either elliptic or parabolic. Since the semigroup generated by a dominated set only contains hyperbolic transformations, $\{f,g\}$ is not dominated in this case. On the other hand, if $\tfrac{1}{2} t> \sqrt{C(f,g)}$, Proposition \ref{newlem} immediately yields that $\{f,g\}$ is dominated, since
\[
\max\left\{\sqrt{C(f,g)},\frac{1}{\sqrt{C(f,g)}}\right\} = \sqrt{C(f,g)}.\qedhere
\]
\end{proof}

Next, we prove the analogue of Proposition \ref{newlem} for two hyperbolic transformations with crossing axes.

\begin{proposition}\label{inter-thm}
Suppose that $f$ and $g$ are hyperbolic transformations with $C(f,g)<0$. If 
\[
\min\left\{\tfrac{1}{2}\lvert\tr(f)\rvert, \tfrac{1}{2}\lvert\tr(g)\rvert\right\}>\max\left\{\sqrt{1-C(f,g)},\sqrt{1-\frac{1}{C(f,g)}}\right\},
\]
then there exist open intervals $U_f, S_f,U_g,S_g$ in $\overline{\mathbb{R}}$, with pairwise disjoint closures, that satisfy the following properties:\begin{enumerate}
\item The intervals $U_f,S_f$ are symmetric with respect to $f$ and $f(S_f^c)$ is contained in $U_f$; and,
\item the intervals $U_g,S_g$ are symmetric with respect to $g$ and $g(S_g^c)$ is contained in $U_g$.
\end{enumerate}
\end{proposition}

\begin{proof}
Since $C(f,g)<0$, the axes of $f$ and $g$ meet at an angle $\theta\in(0, \pi)$. For convenience, we conjugate $f$ and $g$ with a transformation in $\mathrm{PSL}(2,\C)$, so that they act on the unit disc $\D$, their axes cross at the origin and the Euclidean diameter joining $i$ and $-i$ bisects $\theta$. By the symmetry of this configuration, we also have that the Euclidean diameter joining $1$ and $-1$ bisects $\pi-\theta$. Throughout the proof, we use the notation $[z,w]$ to denote the \emph{closed} arc of the unit circle running counter-clockwise from $z\in\partial\mathbb{D}$ to $w\in\partial\D$. Similarly, $(z,w)$ will denote the respective \emph{open} arc. By applying a Euclidean rotation about the origin, we can also assume that both $f$ and $g$ map the arc $[-1,1]$ inside itself, and without loss of generality assume that the arc $[u(f),u(g)]$ does not contain $s(f)$ and $s(g)$. See Figure \ref{intersect} for the geometry of the configuration of $f$ and $g$.\\
We first consider the case where $\theta\in(0, \tfrac{\pi}{2}]$. Due to Corollary \ref{cross-coro} this is equivalent to $C(f,g)\in[-1,0)$, meaning that in this case $\sqrt{1-\frac{1}{C(f,g)}}\geq \sqrt{1-C(f,g)}$, and so
\begin{equation}\label{eq: intersecting eq0}
\max\left\{\sqrt{1-C(f,g)},\sqrt{1-\frac{1}{C(f,g)}}\right\}=\sqrt{1-\frac{1}{C(f,g)}}.
\end{equation}
We shall work independently for $f$ and $g$. Starting with $f$, let $\ell_f$ be the hyperbolic geodesic of $\D$ landing at $-i$ that is perpendicular to $\mathrm{Ax}(f)$, the axis of $f$. Also, write $z_f\in\partial\D$ for the other endpoint of $\ell_f$. Since $\theta\leq\tfrac{\pi}{2}$ and the diameter joining $-i$ and $i$ bisects $\theta$, we have that $z_f$ lies in $[-1,-i]$. Let $a_f\in\D$ be the point of intersection of $\ell_f$ and $\mathrm{Ax}(f)$. By the symmetry of our configuration, we have that the hyperbolic geodesic $\widetilde{\ell_f}$ with endpoints $i$ and $-z_f$ is also perpendicular to $\mathrm{Ax}(f)$ and $-z_f$ lies in $[1,i]$. Also, the point of intersection of $\widetilde{\ell_f}$ and $\mathrm{Ax}(f)$ is $-a_f$. By Remark \ref{rem: rem} we have that if $T_f>d_\D(a_f,-a_f)$, then there exist open arcs $U_f\subset(z_f,-i)$ and $S_f\subset (-z_f,i)$, that are symmetric with respect to $f$, so that $f(S_f^c)\subset U_f$.\\
Our goal now is to show that the condition $T_f>d_\D(a_f,-a_f)$ is equivalent to $\tfrac{1}{2}\lvert\tr(f)\rvert>\sqrt{1-\frac{1}{C(f,g)}}$. Observe that we have $d_\D(a_f,-a_f)=d_\D(a_f,0)+d_\D(0,-a_f)$ and $d_\D(a_f,0)=d_\D(0,-a_f)$. So the condition $T_f>d_\D(a_f,-a_f)$ is equivalent to $\tfrac{1}{2}T_f>d_\D(a_f,0)$. Consider the hyperbolic triangle with vertices $a_f,0$ and $-i$. This triangle has angles $0,\tfrac{\pi}{2}$ and $\tfrac{\theta}{2}$, meaning that we can use the Angle of Parallelism \cite[Theorem 7.9.1]{Be1995} in order to obtain that 
\begin{equation}\label{eq: intersecting eq1}
\cosh d_\D(a_f,0)\ \sin\tfrac{\theta}{2}=1.
\end{equation}
Equation \eqref{eq: intersecting eq1} can be rewritten as
\begin{equation}\label{eq: intersecting eq2}
\cosh d_\D(a_f,0) = \frac{1}{\sqrt{1-\cos^2\tfrac{\theta}{2}}}=\sqrt{1-\frac{1}{C(f,g)}},
\end{equation}
where for the last equality we used Corollary \ref{cross-coro} (i). Hence, using \eqref{eq: intersecting eq2} and the equation $\cosh\tfrac{1}{2}T_f=\tfrac{1}{2}\lvert\tr(f)\rvert$, i.e. \eqref{eq: trace and translation length}, we have that
\begin{align*}
\tfrac{1}{2}T_f>d_\D(a_f,0)& & \iff & &\tfrac{1}{2}\lvert\tr(f)\rvert=\cosh\tfrac{1}{2}T_f> \cosh d_\D(a_f,0) = \sqrt{1-\frac{1}{C(f,g)}}.
\end{align*}
Following the same arguments for $g$ in place of $f$, we can see that if $\tfrac{1}{2}\lvert\tr(g)\rvert>\sqrt{1-\frac{1}{C(f,g)}}$, then there exists a point $z_g\in[-i,1]$ and arcs $U_g\subset(-i,z_g)$ and $S_g\subset(i,-z_g)$, symmetric with respect to $g$, and such that $g(S_g^c)\subset U_g$. By construction, we have that the arcs $U_f,S_f,U_g,S_g$ have pairwise disjoint closures, which concludes the proof in the case where $\theta\in(0,\tfrac{\pi}{2}]$.\\
For the case $\theta\in(\tfrac{\pi}{2},\pi)$, we can apply the above arguments to the pair $f^{-1},g$, since their axes cross at angle $\pi-\theta\in(0,\tfrac{\pi}{2})$. Recalling that $C(f^{-1},g)=1/C(f,g)$ yields the desired conclusion.
\end{proof}

\begin{figure}[ht]
\centering
\begin{tikzpicture}[scale=3,decoration={
    markings,
    mark=at position 0.8 with {\arrow{To}}}
    ] 
\coordinate (o) at (0,0);
\coordinate (l) at (-0.71, -0.71);
\coordinate (r) at (0, -1);
\coordinate (tr) at (-0.71,0.71);
\coordinate (tl) at (0.71,0.71);

\draw [line width=0.4pt] (0.,0.) circle (1.cm);
\draw [line width=0.4pt, postaction= {decorate}] (0.7071067811865475,0.7071067811865475) -- (-0.7071067811865476,-0.7071067811865476);
\draw [line width=0.4pt, postaction= {decorate}] (-0.7071067811865477,0.7071067811865475) --  (0.7071067811865476,-0.7071067811865474);
\draw [dashed, line width=0.4pt] (0,-1)-- (0,0);
\pic [draw, "$\tfrac{\theta}{2}$", angle eccentricity=1.5, line width=0.4pt] {angle = l--o--r};
\pic [draw, "$\theta$", angle eccentricity=1.5, line width=0.4pt] {angle = tl--o--tr};
\filldraw[black] (-0.71,-0.71) circle (.5pt) node[below left] {$u(f)$};
\filldraw[black] (0.71,-0.71) circle (.5pt) node[below right] {$u(g)$};
\filldraw[black] (0.71,0.71) circle (.5pt) node[above right] {$s(f)$};
\filldraw[black] (-0.71,0.71) circle (.5pt) node[above left] {$s(g)$};
\filldraw[black] (0,-1) circle (.5pt) node[below] {$-i$};
\filldraw[black] (-1,0) circle (.5pt) node[left] {$z_f$};
\filldraw[black] (1,0) circle (.5pt) node[right] {$-z_f$};
\filldraw[black] (-0.29,-0.29) circle (.5pt) node[above, yshift=2pt] {$a_f$};
\filldraw[black] (0.29,0.29) circle (.5pt) node[right] {$-a_f$};
\filldraw[black] (0,0) circle (.5pt) node[right, xshift=2pt] {0};
\filldraw[black] (0,1) circle (.5pt) node[above] {$i$};
\node [left] at (-0.40,-0.37) {$f$};
\node [right] at (0.41,-0.38) {$g$};

\draw [shift={(-1.,-1.)}]  plot[domain=0.:1.5707963267948968,variable=\t]({1.*1.*cos(\t r)+0.*1.*sin(\t r)},{0.*1.*cos(\t r)+1.*1.*sin(\t r)});
\draw [shift={(1.,1.)}]  plot[domain=0.:1.5707963267948968,variable=\t]({0.*1.*cos(\t r)+-1.*1.*sin(\t r)},{-1.*1.*cos(\t r)+0.*1.*sin(\t r)});

\end{tikzpicture}
\caption{Hyperbolic transformations with crossing axes in Proposition~\ref{inter-thm}.}\label{intersect}
\end{figure}

The next theorem combines the cases presented in Propositions \ref{newlem} and \ref{inter-thm} into a single statement. 

\begin{theorem}\label{thm: two-gen constraints}
Suppose that $f,g$ are hyperbolic transformations with $C(f,g)\neq0$. If
\[
\min\left\{\tfrac{1}{2}\lvert\tr(f)\rvert, \tfrac{1}{2}\lvert\tr(g)\rvert\right\}>\max\left\{\sqrt{1+\left\lvert C(f,g)\right\rvert},\sqrt{1+\frac{1}{\left\lvert C(f,g)\right\rvert}}\right\},
\]
then there exist open intervals $U_f, S_f,U_g,S_g$ in $\overline{\mathbb{R}}$, with pairwise disjoint closures, that satisfy the following properties: \begin{enumerate}
\item The intervals $U_f,S_f$ are symmetric with respect to $f$ and $f(S_f^c)$ is contained in $U_f$; and,
\item the intervals $U_g,S_g$ are symmetric with respect to $g$ and $g(S_g^c)$ is contained in $U_g$.
\end{enumerate}
\end{theorem}

\begin{proof}
If $C(f,g)<0$, then the the statement of the theorem is identical to the statement of Proposition \ref{inter-thm}. If $C(f,g)>0$, then 
\begin{align*}
\min\left\{\tfrac{1}{2}\lvert\tr(f)\rvert, \tfrac{1}{2}\lvert\tr(g)\rvert\right\}&>\max\left\{\sqrt{1+\left\lvert C(f,g)\right\rvert},\sqrt{1+\frac{1}{\left\lvert C(f,g)\right\rvert}}\right\}\\
&>\max\left\{\sqrt{C(f,g)},\sqrt{\frac{1}{C(f,g)}}\right\},
\end{align*}
and so the conclusion is obtained by applying Proposition \ref{newlem}.
\end{proof}

We end this section with a simple remark combining Remark \ref{rem: rem} and Theorem \ref{thm: two-gen constraints}.

\begin{remark}\label{rem: choice of intervals}
Let $f$ and $g$ by hyperbolic transformations with $C(f,g)\neq 0$, satisfying
\[
\min\left\{\tfrac{1}{2}\lvert\tr(f)\rvert, \tfrac{1}{2}\lvert\tr(g)\rvert\right\}>\max\left\{\sqrt{1+\left\lvert C(f,g)\right\rvert},\sqrt{1+\frac{1}{\left\lvert C(f,g)\right\rvert}}\right\},
\]
as in the assumption of Theorem \ref{thm: two-gen constraints}. For any $\epsilon>0$ with
\[
\tfrac{1}{2}\lvert\tr(f)\rvert> \max\left\{\sqrt{1+\left\lvert C(f,g)\right\rvert},\sqrt{1+\frac{1}{\left\lvert C(f,g)\right\rvert}}\right\}\cdot\epsilon,
\]
the intervals $U_f,S_f$ that appear in the conclusion of Theorem \ref{thm: two-gen constraints} can be chosen to have the following property:\\
If $\ell_U,\gamma_S$ are the hyperbolic geodesics of $\H$ with the same endpoints as $U_f$ and $S_f$, respectively, then $d_\H(\ell_U,\gamma_S)$, the hyperbolic distance between $\ell_U$ and $\gamma_S$, satisfies 
\[
\max\left\{\sqrt{1+\left\lvert C(f,g)\right\rvert},\sqrt{1+\frac{1}{\left\lvert C(f,g)\right\rvert}}\right\}\cdot \epsilon=\cosh \tfrac{1}{2}d_\H(\ell_U,\gamma_S).
\]
An analogous property will hold for the intervals $U_g,S_g$.
\end{remark}

\section{Proof of Theorem~\ref{MAIN}} \label{sect: proof of main theorem}
In this section we prove Theorem \ref{MAIN}, which is restated below in the setting of $\PSL$, for convenience. 

\begin{theorem*}
Let $u_1,u_2,\dots,u_n,s_1,s_2,\dots s_N\in\R\P^1$ be $2N$ pairwise distinct points in the projective real line. Suppose that $f_1,f_2,\dots f_N\in\PSL$ are  hyperbolic transformations with $u(f_i)=u_i$ and $s(f_i)=s_i$, for $i=1,2,\dots,N$, and consider the constants 
\[
M_{i,j} = \max\left\{1+\left\lvert C(f_i,f_j)\right\rvert,1+\frac{1}{\left\lvert C(f_i,f_j)\right\rvert}\right\},
\]
for all pairs $i,j\in\{1,2,\dots,N\}$ with $i\neq j$. If 
\begin{equation}\label{eq: main assumption}
\tfrac{1}{2}\lvert \tr(f_k)\rvert >4\ \max_{i\neq j}\{M_{i,j}\}\cdot \max_{i\neq j}\left\{\frac{\lvert C(f_i,f_j)\rvert +1}{\lvert C(f_i,f_j)-1\rvert}\right\}, \quad \text{for all}\ k=1,2,\dots,N,
\end{equation}
then the set $\{f_1,f_2,\dots, f_N\}$ is dominated. 
\end{theorem*}

Before proceeding with the proof of Theorem \ref{MAIN}, let us briefly discuss Schottky groups. We say that the group $G$ generated by a set $\{f_1,f_2,\dots,f_N\}\subset\PSL$ is a \emph{(classical) Schottky group}, if there exist $2N$ open intervals $U^1,S^1,U^2,S^2,\cdots,U^N,S^N\subset \overline{\R}$ with pairwise disjoint closures, so that $f_i(\overline{\R}\setminus S^i)\subset U^i$, for all $i=1,2,\dots, N$. Due to a result of Maskit \cite{Ma1967}, $G$ is a Schottky group if and only if it is a free, discrete subgroup of $\PSL$, all of whose non-identity elements are hyperbolic.

Note that if the group generated by $\{f_1,f_2,\dots,f_N\}$ is a Schottky group, and $U^i,S^i$ are intervals of $\overline{\R}$ as in the previous paragraph, for $i=1,2,\dots,N$, then each $f_i$ maps $\overline{\bigcup_{k=1}^N U^k}$ inside the open interval $U^i$. Therefore, Theorem \ref{aby} implies that $\{f_1,f_2,\dots, f_N\}$ is dominated. The converse, however, does not hold, as one can see for a pair $\{f,g\}\subset\PSL$ of hyperbolic transformations with $u(f)=u(g)$, which is dominated but does not generate a discrete group.

\begin{proof}[Proof of Theorem \ref{MAIN}]
Suppose that $f_1,f_2,\dots,f_N$ are hyperbolic transformations, with the properties described in the statement of the theorem. Note that, by assumption, the fixed points of all $f_i$ are distinct, meaning that $C(f_i,f_j)$ is well-defined and $C(f_i,f_j)\neq0$, for all $i\neq j$. Due to the discussion preceding the proof, it suffices to show that under the trace condition in \eqref{eq: main assumption} we can find open intervals $U^1,S^1,U^2,S^2\dots, U^k,S^k\subset\overline{\R}$, with pairwise disjoint closures, so that each $f_k$ maps $\overline{\R}\setminus S^k$ into $U^k$. This would also prove that the group generated by $\{f_1,f_2,\dots, f_N\}$ is Schottky, as we mentioned in the Introduction.\\
Let $\epsilon>1$ be such that for all $k=1,2,\dots,N$ we have that
\begin{equation}\label{eq: proof epsilon}
\tfrac{1}{2}\lvert\tr(f_k)\rvert > 4\epsilon\ \max_{i\neq j}\{M_{i,j}\}\cdot \max_{i\neq j}\left\{\frac{\lvert C(f_i,f_j)\rvert +1}{\lvert C(f_i,f_j)-1\rvert}\right\}.
\end{equation}
Fix $k=1,2,\dots,N$. For any $i\neq k$, the hypothesis \eqref{eq: main assumption} and \eqref{eq: proof epsilon} yield  
\begin{align}
\min\{\tfrac{1}{2}\lvert \tr(f_k)\rvert,\tfrac{1}{2}\lvert \tr(f_i)\rvert\}&> 4\epsilon\cdot M_{k,i}\cdot \frac{\lvert C(f_k,f_i)\rvert +1}{\lvert C(f_k,f_i)-1\rvert} \nonumber\\
	&=4\epsilon\cdot \max\left\{1+\left\lvert C(f_k,f_i)\right\rvert,1+\frac{1}{\left\lvert C(f_k,f_i)\right\rvert}\right\}\cdot \frac{\lvert C(f_k,f_i)\rvert +1}{\lvert C(f_k,f_i)-1\rvert} \nonumber\\
	&>\sqrt{\epsilon} \cdot \max\left\{\sqrt{1+\left\lvert C(f_k,f_i)\right\rvert},\sqrt{1+\frac{1}{\left\lvert C(f_k,f_i)\right\rvert}}\right\} \label{eq: proof estimate 2} \\
	&> \max\left\{\sqrt{1+\left\lvert C(f_k,f_i)\right\rvert},\sqrt{1+\frac{1}{\left\lvert C(f_k,f_i)\right\rvert}}\right\}\nonumber ,
\end{align}
which means that Theorem \ref{thm: two-gen constraints} is applicable to the pair of hyperbolic transformations $f_k,f_i$. So we can find intervals $U_i^k,U_k^i,S_i^k,S_k^i\subset \overline{\R}$ with pairwise disjoint closures so that $U_i^k,S_i^k$ are symmetric with respect to $\mathrm{Ax}(f_k)$, the axis of $f_k$, and $U_k^i,S_k^i$ are symmetric with respect to $\mathrm{Ax}(f_i)$ . Also the intervals satisfy $f_k\left(\overline{\R}\setminus S_i^k\right)\subset U_i^k$ and $f_i\left(\overline{\R}\setminus S_k^i\right)\subset U_k^i$.\\
Moreover, in \eqref{eq: proof estimate 2} we got that
\[
\min\{\tfrac{1}{2}\lvert \tr(f_k)\rvert,\tfrac{1}{2}\lvert \tr(f_i)\rvert\}> \sqrt{\epsilon}\cdot \max\left\{\sqrt{1+\left\lvert C(f_k,f_i)\right\rvert},\sqrt{1+\frac{1}{\left\lvert C(f_k,f_i)\right\rvert}}\right\}.
\]
Hence, according to Remark \ref{rem: choice of intervals}, we can choose the intervals $U_i^k,U_k^i,S_i^k,S_k^i$ to have the following properties:\\
Let $\ell_i^k,\ell_k^i$ be the hyperbolic geodesics of $\H$ with the same endpoints as $U_i^k$ and $U_k^i$, respectively. Similarly, $\gamma_i^k,\gamma_k^i$ are the hyperbolic geodesics of $\H$ with the same endpoints as $S_i^k$ and $S_k^i$, respectively. Then the geodesics $\ell_i^k,\ell_k^i,\gamma_i^k,\gamma_k^i$ are pairwise distinct and satisfy
\begin{equation}\label{eq: proof distance of intervals}
\cosh\tfrac{1}{2}d_\H(\ell_i^k,\gamma_i^k)= \cosh\tfrac{1}{2}d_\H(\ell_k^i,\gamma_k^i)=\max\left\{\sqrt{1+\left\lvert C(f_k,f_i)\right\rvert},\sqrt{1+\frac{1}{\left\lvert C(f_k,f_i)\right\rvert}}\right\}\cdot \sqrt{\epsilon}.
\end{equation}
By repeating this process for all $i=1,2,\dots,N$, with $i\neq k$ we obtain two collections of $N-1$ intervals $\mathbb{U}^k=\{U_1^k,\dots, U^k_{k-1},U^k_{k+1},\dots, U_N^k\}$ and $\mathbb{S}^k=\{S_1^k,\dots, S^k_{k-1},S^k_{k+1},\dots, S_N^k\}$. Since $U^k_i,S^k_i$ are symmetric with respect to the axis of $f_k$, the collections $\mathbb{U}^k$ and $\mathbb{S}^k$ are totally ordered with the set inclusion operation; i.e. for any $U^k_i,U^k_j\in\mathbb{U}^k$ we have that either $U^k_i\subseteq U^k_j$ or $U^k_j\subseteq U^k_i$, and similarly for the intervals in $\mathbb{S}^k$. Let $U^k_n$ be a least element of $\mathbb{U}^k$ (an inner-most interval in the collection) and $S^k_m$ a least element of $\mathbb{S}^k$. Here, we say \emph{a} least element, and not \emph{the}, since they are not necessarily unique. Furthermore, the indices $n$ and $m$ might not coincide. Let us also consider the intervals $U^k_m$ and $S^k_n$. Because $U^k_n$ and $S^k_m$ are least elements, we have that $U^k_n\subseteq U^k_m$ and $S^k_m\subseteq S^k_n$. Now, observe that 
\[
\overline{U^k_n}\cap \overline{S^k_m}\subset \overline{U^k_n}\cap \overline{S^k_n}=\emptyset,
\]
because, by construction, $U^k_n$ and $S^k_n$ have disjoint closures.\\
Now let $k_0=1,2,\dots,N$ with $k\neq k_0$. Following the process described above for $k_0$ in place of $k$, we obtain collections $\mathbb{U}^{k_0}, \mathbb{S}^{k_0}$ of intervals that are symmetric with respect to $\mathrm{Ax}(f_{k_0})$. Let $U^{k_0}_{n_0}$ and $S^{k_0}_{m_0}$ be some least elements in $\mathbb{U}^{k_0}$ and $\mathbb{S}^{k_0}$, respectively. We want to also show that the intervals $U^k_n,S^k_m,U^{k_0}_{n_0},S^{k_0}_{m_0}$ have pairwise disjoint closures, where $U^k_n,S^k_m$ are the intervals that were constructed above. First, observe  that as we showed before, we have that $\overline{U^{k_0}_{n_0}}\cap\overline{S^{k_0}_{m_0}}=\emptyset$. Since $U^k_n,U^{k_0}_{n_0}$ are least elements in their respective collections, we have that 
\[
\overline{U^{k_0}_{n_0}}\cap\overline{U^k_n}\subset\overline{U^{k_0}_k}\cap\overline{U^k_{k_0}}=\emptyset,
\]
because by construction we have that $U^k_{k_0},S^k_{k_0},U^{k_0}_k,S^{k_0}_k$ have pairwise disjoint closures. Similar arguments yield the desired conclusion.\\
The process we just described allows us to construct intervals $U^1,U^2,\dots,U^N, S^1,S^2,\dots,S^N$ (we suppress the indices and keep the exponents, for visual clarity) that have pairwise disjoint closures, and so that each pair $U^i,S^i$ is symmetric with respect to the axis $\mathrm{Ax}(f_i)$ of $f_i$, for $i=1,2,\dots,N$. Therefore, our proof will be complete upon showing that $f_i\left(\overline{\R}\setminus S^i\right)\subset U^i$, for all $i=1,2,\dots,N$.\\
Focusing on the exponent $k$ we fixed earlier, and reintroducing the indices in our intervals, that is considering the inner-most intervals $U^k_n$ and $S^k_m$ described above, we have to show that
\begin{equation}\label{eq: end goal}
f_k\left(\overline{\R}\setminus S^k_m\right)\subset U^k_n.
\end{equation}
First, observe that if $n=m$, then we already have the desired conclusion, by construction. So, for the rest of the proof, we assume that $n\neq m$. We will also use the intervals $U^k_m$ and $S^k_ n$. Let $\ell^k_n,\ell^k_m$ be the geodesics of $\H$ with the same endpoints as $U^k_n,U^k_m$, respectively. Similarly, $\gamma^k_n,\gamma^k_m$ be the geodesics of $\H$ with the same endpoints as $S^k_n,S^k_m$, respectively. Because the intervals in our construction satisfied \eqref{eq: proof distance of intervals}, we have that
\begin{equation}\label{eq: distance of intervals n}
\cosh\tfrac{1}{2}d_\H(\ell_n^k,\gamma_n^k)=\max\left\{\sqrt{1+\left\lvert C(f_k,f_n)\right\rvert},\sqrt{1+\frac{1}{\left\lvert C(f_k,f_n)\right\rvert}}\right\}\cdot \sqrt{\epsilon},
\end{equation}
and 
\begin{equation}\label{eq: distance of intervals m}
\cosh\tfrac{1}{2}d_\H(\ell_m^k,\gamma_m^k)=\max\left\{\sqrt{1+\left\lvert C(f_k,f_m)\right\rvert},\sqrt{1+\frac{1}{\left\lvert C(f_k,f_m)\right\rvert}}\right\}\cdot \sqrt{\epsilon}.
\end{equation}
Observe that if $T_{f_k}$, the translation length of $f_k$, satisfies $T_{f_k}>d_\H(\ell_n^k,\gamma_m^k)$, then we have the desired inclusion \eqref{eq: end goal}. Equivalently, it suffices to show that
\begin{equation}\label{eq: end goal 2}
\tfrac{1}{2}\lvert \tr(f_k)\rvert =\cosh\tfrac{1}{2}T_{f_k}>\cosh\tfrac{1}{2}d_\H(\ell_n^k,\gamma_m^k).
\end{equation} 
For simplicity, let us write $d_0=d_\H(\ell_n^k,\gamma_m^k)$, $d_n=d_\H(\ell_n^k,\gamma_n^k)$ and $d_m=d_\H(\ell_m^k,\gamma_m^k)$. To simplify future estimates we observe that due to \eqref{eq: distance of intervals n} and \eqref{eq: distance of intervals m} we have that
\begin{align}
&\cosh \tfrac{1}{2}d_n\ \cosh \tfrac{1}{2}d_m = \cosh \tfrac{1}{2} d_\H(\ell_n^k,\gamma_n^k) \ \cosh\tfrac{1}{2}d_\H(\ell_m^k,\gamma_m^k)\nonumber\\
&=\max\left\{\sqrt{1+\left\lvert C(f_k,f_n)\right\rvert},\sqrt{1+\frac{1}{\left\lvert C(f_k,f_n)\right\rvert}}\right\} \max\left\{\sqrt{1+\left\lvert C(f_k,f_m)\right\rvert},\sqrt{1+\frac{1}{\left\lvert C(f_k,f_m)\right\rvert}}\right\} \epsilon \nonumber\\
	&\leq \left(\max_{i\neq j}\left\{ \max\left\{\sqrt{1+\left\lvert C(f_i,f_j)\right\rvert},\sqrt{1+\frac{1}{\left\lvert C(f_i,f_j)\right\rvert}}\right\} \right\}\right)^2 \epsilon \nonumber\\
	&=\max_{i\neq j} \{M_{i,j}\} \epsilon. \label{eq: proof estimate}
\end{align}
We consider two cases, in which we will use the simple inequality $\cosh(x+y)\leq 2\cosh x  \cosh y$, that holds for all $x,y\in\R$.\\
\underline{Case 1:} $d_0\leq d_n + d_m$ (see left-hand side of Figure \ref{prob})\\
Using \eqref{eq: proof epsilon} and  \eqref{eq: proof estimate} we get that
\begin{align*}
\cosh \tfrac{1}{2}d_0 &\leq \cosh \left( \tfrac{1}{2}d_n + \tfrac{1}{2}d_m\right)\leq 2\ \cosh \tfrac{1}{2}d_n \ \cosh\tfrac{1}{2} d_m\\
		&\leq 2 \ \max_{i\neq j} \{M_{i,j}\}  \cdot \epsilon	\leq 2\ \max_{i\neq j}\{M_{i,j}\}\cdot \max_{i\neq j}\left\{\frac{\lvert C(f_i,f_j)\rvert +1}{\lvert C(f_i,f_j)-1\rvert}\right\} \cdot \epsilon\\
		&< 4\ \max_{i\neq j}\{M_{i,j}\}\cdot \max_{i\neq j}\left\{\frac{\lvert C(f_i,f_j)\rvert +1}{\lvert C(f_i,f_j)-1\rvert}\right\}\cdot\epsilon < \tfrac{1}{2}\lvert \tr(f_k)\rvert ,
\end{align*}
as required for \eqref{eq: end goal 2}.\\
\underline{Case 2:} $d_0>d_n+d_m$ (see right-hand side of Figure \ref{prob})\\
In this case we have that $d_0=d_n+d_m+d_\H(\gamma_n^k,\ell_m^k)$, or equivalently 
\begin{equation}\label{eq: case 2 eq1}
\cosh \tfrac{1}{2}d_0 = \cosh\left( \tfrac{1}{2}d_n+\tfrac{1}{2}d_m + \tfrac{1}{2}d_\H(\gamma_n^k,\ell_m^k)\right) \leq 4\ \cosh \tfrac{1}{2}d_n\ \cosh\tfrac{1}{2}d_m \ \cosh \tfrac{1}{2}d_\H(\gamma_n^k,\ell_m^k).
\end{equation}
Let $H_n^k\subset \H$ be the open hyperbolic half-plane that is bounded by the geodesic $\gamma_n^k$ and contains $\ell_n^k$. Similarly, $H_m^k$ denotes the open hyperbolic half-plane bounded by $\ell_m^k$, that contains $\gamma_m^k$. Note that the half-planes $H_n^k, H_m^k$ have disjoint closures, and
\begin{equation}\label{eq: case 2 eq2}
\inf\left\{d_\H(z,w)\colon z\in H_n^k\ \text{and} \ w\in H_m^k\right\}=d_\H(\gamma_n^k,\ell_m^k).
\end{equation}
Moreover, by construction of the intervals $U_n^k, S_n^k,U_m^k, S_m^k$, we have that $\mathrm{Ax}(f_n)\subset H_n^k$, and similarly $\mathrm{Ax}(f_m)\subset H_m^k$. Therefore the axes of $f_n$ and $f_m$ are disjoint, and
\begin{equation}\label{eq: case 2 eq3}
d_\H\left( \mathrm{Ax}(f_n), \mathrm{Ax}(f_m)\right)>\inf\left\{d_\H(z,w)\colon z\in H_n^k\ \text{and} \ w\in H_m^k\right\}.
\end{equation}
So, due to \eqref{eq: case 2 eq2} and \eqref{eq: case 2 eq3}, inequality \eqref{eq: case 2 eq1} yields
\begin{align}
\cosh \tfrac{1}{2}d_0 &< 4\ \cosh \tfrac{1}{2}d_n\ \cosh\tfrac{1}{2}d_m \ \cosh\tfrac{1}{2}d_\H\left( \mathrm{Ax}(f_n), \mathrm{Ax}(f_m)\right)\nonumber \\
	&\leq 4\ \cosh \tfrac{1}{2}d_n\ \cosh\tfrac{1}{2}d_m \ \cosh d_\H\left( \mathrm{Ax}(f_n), \mathrm{Ax}(f_m)\right). \label{eq: case 2 eq4}
\end{align}
But, according to Corollary \ref{cross-coro}, we have that $\cosh d_\H\left( \mathrm{Ax}(f_n), \mathrm{Ax}(f_m)\right) = \frac{C(f_n,f_m)+1}{\lvert C(f_n,f_m)-1\rvert}$. Substituting this to \eqref{eq: case 2 eq4} and using \eqref{eq: proof epsilon}, \eqref{eq: proof estimate} yield
\begin{align*}
\cosh \tfrac{1}{2}d_0 \leq 4\ \max_{i\neq j}\{M_{i,j}\}\cdot \epsilon\cdot \frac{C(f_n,f_m)+1}{\lvert C(f_n,f_m)-1\rvert}<\tfrac{1}{2}\lvert\tr(f_k)\rvert,
\end{align*}
and we are once more led to \eqref{eq: end goal 2}.
\end{proof}

\begin{figure}[ht]
\centering
\begin{tikzpicture}[scale =3,decoration={
    markings,
    mark=at position 0.5 with {\arrow{To}}}
    ] 
    
\begin{scope}[xshift=-1.25cm]

\draw [line width=0.4pt] (0.,0.) circle (1.cm);

\draw [shift={(-1.5454824813315002,0.)},line width=0.4pt, postaction={decorate}]  plot[domain=-0.7037051677978097:0.7037051677978099,variable=\t]({1.*1.1783531304759927*cos(\t r)+0.*1.1783531304759927*sin(\t r)},{0.*1.1783531304759927*cos(\t r)+1.*1.1783531304759927*sin(\t r)});

\draw [shift={(-0.6470471274048067,-0.7650451096901801)},line width=0.4pt]  plot[domain=-0.6391519824208531:2.3766859715396405,variable=\t]({1.*0.06296034421509385*cos(\t r)+0.*0.06296034421509385*sin(\t r)},{0.*0.06296034421509385*cos(\t r)+1.*0.06296034421509385*sin(\t r)});

\draw [shift={(-0.6470471274048067,0.7650451096901801)},line width=0.4pt]  plot[domain=-2.3766859715396405:0.6391519824208534,variable=\t]({1.*0.06296034421509375*cos(\t r)+0.*0.06296034421509375*sin(\t r)},{0.*0.06296034421509375*cos(\t r)+1.*0.06296034421509375*sin(\t r)});

\draw [shift={(-0.6470471274048138,4.047607328608614)},line width=0.4pt]  plot[domain=4.624457727704823:5.1173561119792,variable=\t]({1.*3.9751471761040773*cos(\t r)+0.*3.9751471761040773*sin(\t r)},{0.*3.9751471761040773*cos(\t r)+1.*3.9751471761040773*sin(\t r)});

\draw [shift={(-0.6470471274048138,-4.047607328608585)},line width=0.4pt]  plot[domain=1.165829195200384:1.658727579474764,variable=\t]({1.*3.975147176104047*cos(\t r)+0.*3.975147176104047*sin(\t r)},{0.*3.975147176104047*cos(\t r)+1.*3.975147176104047*sin(\t r)});

\draw [shift={(-1.6316693623744682,-0.025988890184495565)},line width=0.4pt, dashed, Bar-Bar, xshift=2pt]  plot[domain=-0.035078700271639306:0.6576583832202162,variable=\t]({1.*1.2895814555601068*cos(\t r)+0.*1.2895814555601068*sin(\t r)},{0.*1.2895814555601068*cos(\t r)+1.*1.2895814555601068*sin(\t r)});

\draw [shift={(-1.6316693623744682,0.025988890184495565)},line width=0.4pt, dashed, Bar-Bar, xshift=.7pt]  plot[domain=-0.617658383220216:0.03507870027163894,variable=\t]({1.*1.2895814555601068*cos(\t r)+0.*1.2895814555601068*sin(\t r)},{0.*1.2895814555601068*cos(\t r)+1.*1.2895814555601068*sin(\t r)});
\

\draw [dashed, shift={(-1.5454824813315002,0.)},line width=0.4pt, Bar-Bar]  plot[domain=-0.645:0.645,variable=\t]({1.*1.13*cos(\t r)+0.*1.13*sin(\t r)},{0.*1.13*cos(\t r)+1.*1.13*sin(\t r)});

\node[left, xshift=-5pt] at (-0.37,0) {$f_k$}; 
\node[below left] at (-0.65, -0.76) {$S^k_m$};
\node[above right] at (-0.57, -0.87) {$\gamma_m^k$};
\node[above left] at (-0.65,0.76) {$U^k_n$};
\node[below right, yshift=1.7pt, xshift=1pt] at (-0.57,0.87) {$\ell_n^k$};
\node[right,xshift=3pt] at (-0.42,0.45) {$d_n$};
\node[right, xshift=.5pt] at (-0.43,-0.44) {$d_m$};
\node[above] at (.53,.25) {$\ell_m^k$};
\node[below] at (.53, -.25) {$\gamma_n^k$};
\node[left] at (-0.47,0.39) {$d_0$};
\end{scope}    
    
  \begin{scope}[xshift=1.25cm]

\draw [line width=0.4pt] (0.,0.) circle (1.cm);

\draw [shift={(-1.5454824813315002,0.)},line width=0.4pt, postaction={decorate}]  plot[domain=-0.7037051677978097:0.7037051677978099,variable=\t]({1.*1.1783531304759927*cos(\t r)+0.*1.1783531304759927*sin(\t r)},{0.*1.1783531304759927*cos(\t r)+1.*1.1783531304759927*sin(\t r)});

\draw [shift={(0.2362210614151159,1.0113610549987953)},line width=0.4pt, postaction={decorate}]  plot[domain=3.1855635664261706:5.780306421196157,variable=\t]({1.*0.2804488784508875*cos(\t r)+0.*0.2804488784508875*sin(\t r)},{0.*0.2804488784508875*cos(\t r)+1.*0.2804488784508875*sin(\t r)});

\draw [shift={(0.2362210614151159,-1.0113610549987953)},line width=0.4pt, postaction={decorate}]  plot[domain=0.5028788859834298:3.0976217407534157,variable=\t]({1.*0.2804488784508875*cos(\t r)+0.*0.2804488784508875*sin(\t r)},{0.*0.2804488784508875*cos(\t r)+1.*0.2804488784508875*sin(\t r)});

\draw [shift={(-0.6470471274048067,-0.7650451096901801)},line width=0.4pt]  plot[domain=-0.6391519824208531:2.3766859715396405,variable=\t]({1.*0.06296034421509385*cos(\t r)+0.*0.06296034421509385*sin(\t r)},{0.*0.06296034421509385*cos(\t r)+1.*0.06296034421509385*sin(\t r)});

\draw [shift={(-0.6470471274048067,0.7650451096901801)},line width=0.4pt]  plot[domain=-2.3766859715396405:0.6391519824208534,variable=\t]({1.*0.06296034421509375*cos(\t r)+0.*0.06296034421509375*sin(\t r)},{0.*0.06296034421509375*cos(\t r)+1.*0.06296034421509375*sin(\t r)});

\draw [shift={(-0.6470471274048138,4.047607328608614)},line width=0.4pt]  plot[domain=4.624457727704823:5.1173561119792,variable=\t]({1.*3.9751471761040773*cos(\t r)+0.*3.9751471761040773*sin(\t r)},{0.*3.9751471761040773*cos(\t r)+1.*3.9751471761040773*sin(\t r)});

\draw [shift={(-0.6470471274048138,-4.047607328608585)},line width=0.4pt]  plot[domain=1.165829195200384:1.658727579474764,variable=\t]({1.*3.975147176104047*cos(\t r)+0.*3.975147176104047*sin(\t r)},{0.*3.975147176104047*cos(\t r)+1.*3.975147176104047*sin(\t r)});

\draw [dashed, shift={(-1.6519224520504425,-0.04229476443231125)},line width=0.4pt,Bar-Bar]  plot[domain=0.1100067018665286:0.618847268440485,variable=\t]({1.*1.33*cos(\t r)+0.*1.33*sin(\t r)},{0.*1.33*cos(\t r)+1.*1.33*sin(\t r)});

\draw [dashed, shift={(-1.6519224520504425,0.04229476443231125)},line width=0.4pt, Bar-Bar]  plot[domain=5.664338038739102:6.1731786053130575,variable=\t]({1.*1.33*cos(\t r)+0.*1.33*sin(\t r)},{0.*1.33*cos(\t r)+1.*1.33*sin(\t r)});

\draw [dashed, shift={(-1.5454824813315002,0.)},line width=0.4pt, Bar-Bar]  plot[domain=-0.645:0.645,variable=\t]({1.*1.13*cos(\t r)+0.*1.13*sin(\t r)},{0.*1.13*cos(\t r)+1.*1.13*sin(\t r)});

\node[right] at (-0.37,0) {$f_k$}; 
\node[below left] at (-0.65, -0.76) {$S^k_m$};
\node[above right] at (-0.57, -0.87) {$\gamma_m^k$};
\node[above left] at (-0.65,0.76) {$U^k_n$};
\node[below right] at (-0.57,0.87) {$\ell_n^k$};
\node[right] at (-0.42,0.45) {$d_n$};
\node[right] at (-0.43,-0.44) {$d_m$};
\node[below] at (.19,.74) {$f_n$};
\node[above] at (.19,-.74) {$f_m$};
\node[above] at (.53,.25) {$\gamma_n^k$};
\node[below] at (.53, -.25) {$\ell_m^k$};
\node[left] at (-0.47,0.39) {$d_0$};
\end{scope}

\end{tikzpicture}
\caption{The two cases in the proof of Theorem \ref{MAIN}.}\label{prob}
\end{figure}

\section*{Acknowledgments}
The research project is implemented in the framework of H.F.R.I call ``3rd Call for H.F.R.I.'s Research Projects to Support Faculty Members \& Researchers” (H.F.R.I. Project Number: 24979). 

\begin{bibdiv}
\begin{biblist}

\bib{AvBoYo2010}{article}{
   author={Avila, Artur},
   author={Bochi, Jairo},
   author={Yoccoz, Jean-Christophe},
   title={Uniformly hyperbolic finite-valued ${\rm SL}(2,\Bbb R)$-cocycles},
   journal={Comment. Math. Helv.},
   volume={85},
   date={2010},
   number={4},
   pages={813--884},
}

\bib{BaKaMo2020}{article}{
   author={B\'ar\'any, Bal\'azs},
   author={K\"aenm\"aki, Antti},
   author={Morris, Ian D.},
   title={Domination, almost additivity, and thermodynamic formalism for
   planar matrix cocycles},
   journal={Israel J. Math.},
   volume={239},
   date={2020},
   number={1},
   pages={173--214}
}

\bib{BBC1996}{article}{
   author={B\'ar\'any, I.},
   author={Beardon, A. F.},
   author={Carne, T. K.},
   title={Barycentric subdivision of triangles and semigroups of M\"obius
   maps},
   journal={Mathematika},
   volume={43},
   date={1996},
   number={1},
   pages={165--171}
}

\bib{BaVi2012}{article}{
   author={Barnsley, Michael F.},
   author={Vince, Andrew},
   title={Real projective iterated function systems},
   journal={J. Geom. Anal.},
   volume={22},
   date={2012},
   number={4},
   pages={1137--1172}
}

\bib{Be1995}{book}{
   author={Beardon, Alan F.},
   title={The geometry of discrete groups},
   series={Graduate Texts in Mathematics},
   volume={91},
   note={Corrected reprint of the 1983 original},
   publisher={Springer-Verlag, New York},
   date={1995},
   pages={xii+337},
}

\bib{BlMo2019}{article}{
   author={Blumenthal, Alex},
   author={Morris, Ian D.},
   title={Characterization of dominated splittings for operator cocycles
   acting on Banach spaces},
   journal={J. Differential Equations},
   volume={267},
   date={2019},
   number={7},
   pages={3977--4013}
}

\bib{BoGo2009}{article}{
   author={Bochi, Jairo},
   author={Gourmelon, Nicolas},
   title={Some characterizations of domination},
   journal={Math. Z.},
   volume={263},
   date={2009},
   number={1},
   pages={221--231}
   }

\bib{BoMo2015}{article}{
   author={Bochi, Jairo},
   author={Morris, Ian D.},
   title={Continuity properties of the lower spectral radius},
   journal={Proc. Lond. Math. Soc. (3)},
   volume={110},
   date={2015},
   number={2},
   pages={477--509}
}

\bib{BoPoSa2019}{article}{
   author={Bochi, Jairo},
   author={Potrie, Rafael},
   author={Sambarino, Andr\'es},
   title={Anosov representations and dominated splittings},
   journal={J. Eur. Math. Soc. (JEMS)},
   volume={21},
   date={2019},
   number={11},
   pages={3343--3414}
}

\bib{BoVi2005}{article}{
   author={Bochi, Jairo},
   author={Viana, Marcelo},
   title={The Lyapunov exponents of generic volume-preserving and symplectic
   maps},
   journal={Ann. of Math. (2)},
   volume={161},
   date={2005},
   number={3},
   pages={1423--1485}
}

\bib{BoDiPu2003}{article}{
   author={Bonatti, C.},
   author={D\'iaz, L. J.},
   author={Pujals, E. R.},
   title={A $C^1$-generic dichotomy for diffeomorphisms: weak forms of
   hyperbolicity or infinitely many sinks or sources},
   language={English, with English and French summaries},
   journal={Ann. of Math. (2)},
   volume={158},
   date={2003},
   number={2},
   pages={355--418}
}

\bib{Ch2022}{article}{
   author={Christodoulou, Argyrios},
   title={Parameter spaces of locally constant cocycles},
   journal={Int. Math. Res. Not. IMRN},
   date={2022},
   number={17},
   pages={13590--13628}
}

\bib{ChJu2021}{article}{
   author={Christodoulou, Argyrios},
   author={Jurga, Natalia},
   title={The Hausdorff dimension of self-projective sets},
   journal={Trans. Amer. Math. Soc.},
   volume={379},
   date={2026},
   number={1},
   pages={1--32}
}

\bib{CoKl2000}{book}{
   author={Colonius, Fritz},
   author={Kliemann, Wolfgang},
   title={The dynamics of control},
   series={Systems \& Control: Foundations \& Applications},
   note={With an appendix by Lars Gr\"une},
   publisher={Birkh\"auser Boston, Inc., Boston, MA},
   date={2000},
   pages={xii+629}
}

\bib{DaFiGo2022}{article}{
   author={Damanik, David},
   author={Fillman, Jake},
   author={Gohlke, Philipp},
   title={Spectral characteristics of Schr\"odinger operators generated by
   product systems},
   journal={J. Spectr. Theory},
   volume={12},
   date={2022},
   number={4},
   pages={1659--1718}
}

\bib{He2015}{article}{
   author={Herrlich, Frank},
   title={Schottky space and Teichm\"uller disks},
   conference={
      title={Handbook of group actions. Vol. I},
   },
   book={
      series={Adv. Lect. Math. (ALM)},
      volume={31},
      publisher={Int. Press, Somerville, MA},
   },
   isbn={978-1-57146-300-5},
   date={2015},
   pages={289--308}
}

\bib{JS}{article}{
   author={Jacques, Matthew},
   author={Short, Ian},
   title={Semigroups of isometries of the hyperbolic plane},
   journal={Int. Math. Res. Not. IMRN},
   date={2022},
   number={9},
   pages={6403--6463}
}

\bib{Jo1976}{article}{
   author={J\o rgensen, Troels},
   title={On discrete groups of M\"{o}bius transformations},
   journal={Amer. J. Math.},
   volume={98},
   date={1976},
   number={3},
   pages={739--749}
}

\bib{Ma1978}{article}{
   author={Ma\~n\'e, Ricardo},
   title={Contributions to the stability conjecture},
   journal={Topology},
   volume={17},
   date={1978},
   number={4},
   pages={383--396}
}

\bib{Ma1984}{article}{
   author={Ma\~n\'e, Ricardo},
   title={Oseledec's theorem from the generic viewpoint},
   conference={
      title={Proceedings of the International Congress of Mathematicians,
      Vol.\ 1, 2},
      address={Warsaw},
      date={1983},
   },
   book={
      publisher={PWN, Warsaw},
   },
   isbn={83-01-05523-5},
   date={1984},
   pages={1269--1276}
}

\bib{Ma1967}{article}{
   author={Maskit, Bernard},
   title={A characterization of Schottky groups},
   journal={J. Analyse Math.},
   volume={19},
   date={1967},
   pages={227--230}
}

\bib{Ma1998}{article}{
   author={Maskit, Bernard},
   title={On spaces of classical Schottky groups},
   conference={
      title={In the tradition of Ahlfors and Bers},
      address={Stony Brook, NY},
      date={1998},
   },
   book={
      series={Contemp. Math.},
      volume={256},
      publisher={Amer. Math. Soc., Providence, RI},
   },
   isbn={0-8218-1371-4},
   date={2000},
   pages={227--237}
}

\bib{QTZ2019}{article}{
   author={Quas, Anthony},
   author={Thieullen, Philippe},
   author={Zarrabi, Mohamed},
   title={Explicit bounds for separation between Oseledets subspaces},
   journal={Dyn. Syst.},
   volume={34},
   date={2019},
   number={3},
   pages={517--560}
}

\bib{Sa2016}{article}{
   author={Sambarino, Mart\'in},
   title={A (short) survey on dominated splittings},
   conference={
      title={Mathematical Congress of the Americas},
   },
   book={
      series={Contemp. Math.},
      volume={656},
      publisher={Amer. Math. Soc., Providence, RI},
   },
   isbn={978-1-4704-2310-0},
   date={2016},
   pages={149--183}
}

\bib{SoTa2021}{article}{
   author={Solomyak, Boris},
   author={Takahashi, Yuki},
   title={Diophantine property of matrices and attractors of projective
   iterated function systems in $\Bbb{R}\Bbb{P}^1$},
   journal={Int. Math. Res. Not. IMRN},
   date={2021},
   number={16},
   pages={12639--12669}
}

\bib{Wo2022}{article}{
   author={Wood, William},
   title={On the spectrum of the periodic Anderson-Bernoulli model},
   journal={J. Math. Phys.},
   volume={63},
   date={2022},
   number={10},
   pages={Paper No. 102705, 16}
}

\bib{Yo2004}{article}{
   author={Yoccoz, Jean-Christophe},
   title={Some questions and remarks about ${\rm SL}(2,\bold R)$ cocycles},
   conference={
      title={Modern dynamical systems and applications},
   },
   book={
      publisher={Cambridge Univ. Press, Cambridge},
   },
   date={2004},
   pages={447--458},
}

\end{biblist}
\end{bibdiv}

\end{document}